\newtheorem{Thm}{Theorem}
\newtheorem{Lem}{Lemma}
\newtheorem{Rem}{Remark}
\newtheorem*{Prop*}{Proposition}
\newtheorem*{Cor*}{Corollary}
\newtheorem*{Thm*}{Theorem}
\def\eps{\varepsilon}
\def\R{{\mathbb R}}
\def\S{{\mathbb S}}
\newsavebox{\@brx}
\newcommand{\llangle}[1][]{\savebox{\@brx}{\(\m@th{#1\langle}\)}%
  \mathopen{\copy\@brx\mkern2mu\kern-0.9\wd\@brx\usebox{\@brx}}}
\newcommand{\rrangle}[1][]{\savebox{\@brx}{\(\m@th{#1\rangle}\)}%
  \mathclose{\copy\@brx\mkern2mu\kern-0.9\wd\@brx\usebox{\@brx}}}
\begin{document}
%%%%%%%%%%%%%%%%%%%%%%%%%%%

\title[Asymptotic Analysis of linear Boltzmann equation]{simultaneous diffusion and homogenization asymptotic\\ for the linear Boltzmann equation}

\bibliographystyle{plain}

\author[Claude Bardos]{Claude Bardos}
\address{C.B.: LJLL, University of Paris VII, Jussieu, Paris, France.} 
\email{claude.bardos@gmail.com}

\author[Harsha Hutridurga]{Harsha Hutridurga}
\address{H.H.: Department of Mathematics, Imperial College London, London, SW7 2AZ, United Kingdom.}
\email{h.hutridurga-ramaiah@imperial.ac.uk}

\begin{abstract}
This article is on the simultaneous diffusion approximation and homogenization of the linear Boltzmann equation when both the mean free path $\eps$ and the heterogeneity length scale $\eta$ vanish. No periodicity assumption is made on the scattering coefficient of the background material. There is an assumption made on the heterogeneity length scale $\eta$ that it scales as $\eps^\beta$ for $\beta\in(0,\infty)$. In one space dimension, we prove that the solutions to the kinetic model converge to the solutions of an effective diffusion equation for any $\beta\le2$ in the $\eps\to0$ limit. In any arbitrary phase space dimension, under a smallness assumption of a certain quotient involving the scattering coefficient in the $H^{-\frac{1}{2}}$ norm, we again prove that the solutions to the kinetic model converge to the solutions of an effective diffusion equation in the $\eps\to0$ limit.
\end{abstract}

\maketitle

\setcounter{tocdepth}{1}
\tableofcontents

%%%%%%%%%%%%%%%%%%%%%%%%%%%%%%%
\section{Introduction}\label{sec:intro}
%%%%%%%%%%%%%%%%%%%%%%%%%%%%%%%

The unknown quantity modeled by equations in kinetic theory is the probability distribution function $f(t,x,v)$ of a population of particles which is a function of time, position and velocity. The linear Boltzmann equation describes the evolution of the distribution function modeling the collision of a population of particles with a background medium:
\begin{align*}
\partial_t f(t,x,v)
+
v \cdot \nabla_x f(t,x,v)
=
\sigma(x)
\left(
\int_{\mathcal{V}}
f(t,x,w)
\, {\rm d}\mu(w)
-
f(t,x,v)
\right)
\end{align*}
where $\mu$ is a Borel probability measure on the space of velocities $\mathcal{V}$. The coefficient $\sigma(x)$ is the scattering coefficient of the background material. Our objective is to perform an asymptotic analysis when the background medium is inhomogeneous -- say, a composite material with microstructure. We consider two asymptotically small parameters associated with the above kinetic model: (a) mean free path $\eps$ of the particles between two interactions; (b) the scale of heterogeneity $\eta$ of the background medium -- it can be the average distance between two neighboring inhomogeneities or the average size of the inhomogeneities.

In the above kinetic model, the scattering coefficient $\sigma(x)$ represents the background medium. The inhomogeneous nature of the background medium implies that smaller the parameter $\eta$ is, more rapid the oscillations are in $\sigma(x)$. As is standard in the theory of homogenization, we consider a family of scattering coefficients indexed by $\eta$, i.e. $\sigma^\eta(x)$ and study an associated family of solutions to the kinetic model. We also wish to study the evolution of the local equilibria for the above kinetic model. This corresponds to scaling the above kinetic model using parabolic scaling with the parameter $\eps$. The objective of this article is to study the following scaled linear Boltzmann equation
\begin{align*}
\eps \partial_t f^{\eps,\eta}(t,x,v)
+
v\cdot \nabla_x f^{\eps,\eta}(t,x,v)
=
\frac{\sigma^\eta(x)}{\eps}
\left(
\int_{\mathcal{V}}
f^{\eps,\eta}(t,x,w)
\, {\rm d}\mu(w)
-
f^{\eps,\eta}(t,x,v)
\right)
\end{align*}
in the simultaneous limit as both the scaling parameters $\eps$, $\eta$ vanish. The diffusion approximation of the linear Boltzmann equation corresponds to the $\eps\to0$ limit in the above scaled equation (see \cite{bardos2013diffusion} and references therein for the state-of-the-art on the techniques used in the diffusion approximation of linear transport equations). Therefore in the regime $\eps\ll \eta$, one can first perform the diffusion asymptotic for a fixed $\eta$ yielding a parabolic equation with heterogeneous coefficient $\sigma^\eta(x)$. The $\eta\to0$ limit corresponds to deriving an homogenized equation for the thus obtained heterogeneous parabolic equation. This can either be performed using the asymptotic expansions method \cite{bensoussan1978asymptotic} or the two-scale convergence method \cite{nguetseng1989general, allaire1992homogenization} when $\sigma^\eta(x)=\sigma(x/\eta)$ is $\eta$-periodic. The method of H-convergence \cite{murattartar1978} (see also \cite{murat1997h}) can be used when the family $\sigma^\eta(x)$ is a general family of $L^\infty$ coefficients (see \cite{allaire2002shape} for a pedagogical exposition of the method of H-convergence). In the regime $\eta \ll \eps$, we need to homogenize the linear Boltzmann equation in the limit $\eta\to0$ for a fixed $\eps$ (see \cite{dumas2000homogenization} on the homogenization of linear transport equations). This shall be followed by a diffusion asymptotic in the $\eps\to0$ limit. These points have already been observed in an expository article by F. Golse \cite{golse1991particle}.

The present article addresses the issue of simultaneous limit procedure when both the small parameters $\eps$ and $\eta$ vanish. This problem has been addressed in \cite{sentis1980approximation, ben2005diffusion, abdallah2012diffusion, bal2012corrector} when the heterogeneous scattering coefficient is periodic. A first work in this direction goes back to the work of R. Sentis \cite{sentis1980approximation} where the heterogeneity length scale $\eta$ is related to the mean free path as $\eta=\eps^\beta$ with $\beta<1$. Recently, there has been a revival of this problem. The works \cite{abdallah2012diffusion, bal2012corrector} address this problem in the periodic setting and in the regime $\eps\ll\eta$. The approach in \cite{abdallah2012diffusion, bal2012corrector} is to introduce a new parameter $\frac{\eps}{\eta}$ and study some cell problems involving the new small parameter $\frac{\eps}{\eta}$. They extensively use the method of two-scale convergence. We also cite \cite{allaire1999homogenization} where the spectral problem associated with the scaled linear Boltzmann equation is studied when $\eta=\eps$. For the simultaneous limit procedures in the case $\eta=\eps$, we cite \cite{goudon2001approximation, goudon2003homogenization} (see also Chapter 7 in the lecture notes \cite{allaire2013transport}).

Our work, essentially, goes beyond the periodic setting in the spirit of the compensated compactness theory \cite{murattartar1978, murat1978compacite} developed by F. Murat and L. Tartar in the context of homogenization of elliptic and parabolic problems in the late 1970's. All the results and computations in this article are presented for a special case of the stationary linear Boltzmann, i.e. the probability density function is supposed to be time independent. All the results can be straightaway generalized to the time-dependent setting (see Remark \ref{Rem:Non-stationary setting}).

Similar to the work of R. Sentis \cite{sentis1980approximation}, we assume that the two scaling parameters are related as $\eta=\eps^\beta$ where $\beta\in(0,\infty)$. Note, however, that the results in \cite{sentis1980approximation} hold only when $\beta<1$ and under periodicity assumption on the heterogeneous scattering coefficient. Our main result in one dimensional setting (Theorem \ref{thm:1d}) is that the solutions to the linear Boltzmann equation converge to the solutions of an elliptic problem whenever $\beta\le 2$. We also obtain an explicit form of the effective diffusion coefficient in the limit equation -- given in terms of some velocity averages and the weak-* limit of the heterogeneous scattering coefficient. The one-dimensional setting is very special as the divergence operator coincides with the gradient operator resulting in uniform $H^1$ estimates for certain family of second moments -- refer to Section \ref{sec:1d} for further details.

Our main result in any arbitrary dimension is that if the heterogeneous coefficient satisfies
\begin{align*}
\left\|
\frac{\bar{\sigma}(x) - \sigma^\eps(x)}{\bar{\sigma}(x)}
\right\|_{H^{-\frac12}(\Omega)}
=
\mathcal{O}(\eps^{1+})
\end{align*}
for some $\bar{\sigma}\in L^\infty(\Omega)$ and where the family $\sigma^\eps(x)$ is nothing but the family $\sigma^\eta(x)$ with $\eta=\eps^\beta$, then the solutions to the linear Boltzmann equation converge to the solutions of a diffusion equation in physical space. More precisely, we show that the family
\[
f^{\eps,\eta}(x,v) \rightharpoonup \rho(x) \qquad \mbox{ weakly in }L^2(\Omega\times\mathcal{V};{\rm d}x\, {\rm d}\mu) \qquad \mbox{ as }\eps, \eta\to0
\]
where $\rho(x)$ is the $L^2$-weak limit of the family of local densities and that it is the unique solution to a diffusion equation. Our result in arbitrary dimension (Theorem \ref{thm:H-1/2}) essentially employs the moments approach inspired by \cite{bardos1991fluid} and uses the regularity of velocity averages guaranteed by the now well-known velocity averaging lemma \cite{golse1985resultat, golse1988regularity}.

\medskip

{\centering {\bf Plan of the paper:}} In Section \ref{sec:slbe}, we present the linear kinetic model, the scaling parameters and the scaling considered in this article. Section \ref{sec:uniform} gives some uniform (with respect to the scaling parameters $\eps$ and $\eta$) estimates on the solutions to the linear transport equation and associated velocity averages. In Section \ref{sec:moments}, we briefly explain the moments method as given in \cite{bardos1991fluid} and apply this method to the linear Boltzmann equation. Section \ref{sec:1d} is devoted to deriving the limit equation (in the $\eps\to0$ limit) in one dimensional setting -- Theorem \ref{thm:1d}. In Section \ref{sec:arbitrary}, we give a result in any arbitrary dimension under a certain assumption on the scattering coefficient. Finally, in Section \ref{sec:conclude} we give some concluding remarks and perspectives.

\medskip

\noindent{\bf Acknowledgments.} 
This work was initiated during C.B.'s visit to the University of Cambridge. It was supported by the ERC grant \textsc{matkit}.
The authors warmly thank R\'emi Sentis for his constructive suggestions during the preparation of this article.
The authors would also like to thank Thomas Holding for his valuable remarks on a preliminary version of this manuscript and Cl\'ement Mouhot for helpful suggestions.
H.H. acknowledges the support of the ERC grant MATKIT and the EPSRC programme grant ``Mathematical fundamentals of Metamaterials for multiscale Physics and Mechanic'' (EP/L024926/1).

%%%%%%%%%%%%%%%%%%%%%%%%%%%%%%%
\section{Stationary linear Boltzmann equation}\label{sec:slbe}
%%%%%%%%%%%%%%%%%%%%%%%%%%%%%%%

Let $f(x,v)$ be the distribution function which depends on $x\in\Omega\subset\R^d$ (space position) and $v\in\mathcal{V}$ (velocity). The distribution function models the probability density of mono-kinetic particles interacting with the background medium. The velocity space can be either of the following:
\begin{align*}
\mathcal{V} = \R^d;
\quad
\mathcal{V} = \S^{d-1};
\quad
\mathcal{V} = B(0,l):= \Big\{ v\in \R^d\, \mbox{ s.t. }|v|\le l\Big\}.
\end{align*}
We denote by $\mu$ a Borel probability measure on $\mathcal{V}$. We further suppose that
\begin{align}\label{eq:lbe:v-dmuv-0}
\int_{\mathcal{V}}
v
\, {\rm d}\mu(v)
= 0.
\end{align}
In order to define the boundary conditions, taking $n(x)$ to be the unit exterior normal to $\Omega$ at the point $x\in\partial\Omega$, we introduce the following notations:
\begin{align*}
& \Sigma := \lbrace (x,v) \in \partial\Omega\times\mathcal{V} \rbrace 
& \mbox{ Phase space Boundary,}
\\
& \Sigma_+ := \lbrace (x,v) \in \partial\Omega\times\mathcal{V}\mbox{ s.t. } v \cdot n(x) > 0 \rbrace
& \mbox{ Outgoing Boundary,}
\\
& \Sigma_- := \lbrace (x,v) \in \partial\Omega\times\mathcal{V}\mbox{ s.t. } v \cdot n(x) < 0 \rbrace
& \mbox{ Incoming Boundary,}
\\
& \Sigma_0 := \lbrace (x,v) \in \partial\Omega\times\mathcal{V}\mbox{ s.t. } v \cdot n(x) = 0 \rbrace
& \mbox{ Grazing set.}
\end{align*}
Denote by $\gamma_+ f$ (respectively $\gamma_- f$), the trace of $f$ on $\Sigma_+$ (respectively on $\Sigma_-$).\\
The goal is to perform the simultaneous limiting procedure for the scaled problem:
\begin{subequations}
\begin{align}
f^{\eps,\eta}(x,v)
+ \frac{1}{\eps} v \cdot \nabla_x f^{\eps,\eta}(x,v)
+ \frac{1}{\eps^2}\sigma^\eta\left(x\right) \mathcal{L} f^{\eps,\eta}(x,v)
= g(x)
\qquad
\mbox{ for }(x,v)\in \Omega\times\mathcal{V},\label{eq:lbe:model}
\\
\gamma_- f^{\eps,\eta}(x,v)
= 0
\qquad \qquad \qquad
\mbox{ for }(x,v)\in \Sigma_-,\label{eq:lbe:absorption}
\end{align}
\end{subequations}
where the linear Boltzmann operator is the following integral operator:
\begin{align}\label{eq:lbe:Boltzmann-operator}
\mathcal{L} g (v)
:=
g(v)
-
\int_{\mathcal{V}} g(w)
\, {\rm d}\mu(w)
\qquad
\mbox{ for any }g\in L^1(\mathcal{V};{\rm d}\mu).
\end{align}
The family of heterogeneous scattering coefficients indexed by $\eta$, i.e. $\sigma^\eta(x)$ is assumed to be a family of differentiable function such that there exist uniform (with respect to $\eta$) constants $\mathfrak{a}, \mathfrak{b}, \mathfrak{c}$ such that
\begin{align*}
& 0 < \mathfrak{a} \le \sigma^\eta(x) \le \mathfrak{b} < +\infty\quad \forall \, x\in\Omega
\\[0.1 cm]
\mbox{ and }& \|\nabla \sigma^\eta\|_{L^\infty(\Omega)} \le \mathfrak{c}\, \eta^{-1}.
\end{align*}
The source term in \eqref{eq:lbe:model} is assumed to be square-integrable in the space variable, i.e.
\begin{align*}
\|g\|_{L^2(\Omega)}\le C.
\end{align*}
We suppose that the heterogeneity length scale $\eta$ and the mean free path $\eps$ are related as
\begin{align*}
\eta = \eps^\beta
\qquad
\mbox{ for }\beta>0.
\end{align*}
Hence we drop the superscript $\eta$ in \eqref{eq:lbe:model} and index the family of heterogeneous scattering coefficients by $\eps$, i.e. $\sigma^\eps(x)$ which inherits the following bounds from $\sigma^\eta$ given above.
\begin{equation}\label{eq:lbe:hypo-cross-section}
\begin{array}{ll}
& 0 < \mathfrak{a} \le \sigma^\eps(x) \le \mathfrak{b} < +\infty\quad \forall \, x\in\Omega
\\[0.1 cm]
\mbox{ and }& \|\nabla \sigma^\eps\|_{L^\infty(\Omega)} \le \mathfrak{c}\, \eps^{-\beta}
\end{array}
\end{equation}
where the constants $\mathfrak{a}, \mathfrak{b}, \mathfrak{c}$ are uniform with respect to $\eps$. 
\begin{Rem}\label{rem:lbe:uniform}
Consider $\sigma\in W^{1,\infty}(\R^d)$ such that for a.e. $x\in\R^d$,
\begin{align*}
0 < \mathfrak{a} \le \sigma(x) \le \mathfrak{b} < \infty
\qquad\mbox{ and }\qquad
\|\nabla \sigma\|_{L^\infty(\R^d)}
\le \mathfrak{c} <\infty.
\end{align*}
Now consider the family
\begin{align*}
\sigma^\eps(x) := \sigma\left(\frac{x}{\eps^\beta}\right).
\end{align*}
Then, the assumptions given in \eqref{eq:lbe:hypo-cross-section} on the heterogeneous coefficient are satisfied by the above defined family of coefficients.
\end{Rem}
The main objective of this article is to study the following stationary problem in the $\eps\to0$ limit:
\begin{subequations}
\begin{align}
f^\eps(x,v)
+ \frac{1}{\eps} v \cdot \nabla_x f^\eps(x,v)
+ \frac{1}{\eps^2}\sigma^\eps(x) \mathcal{L} f^\eps(x,v)
= g(x)
\qquad
\mbox{ for }(x,v)\in \Omega\times\mathcal{V},\label{eq:lbe:model:eps}
\\
\gamma_- f^\eps(x,v)
= 0
\qquad \qquad \qquad
\mbox{ for }(x,v)\in \Sigma_-.\label{eq:lbe:absorption:eps}
\end{align}
\end{subequations}
We shall prove that the entire family $f^\eps(x,v)$ of solutions to the above kinetic equation exhibit the following compactness property:
\[
f^\eps(x,v) \rightharpoonup \rho(x)
\qquad \mbox{ weakly in }L^2(\Omega\times\mathcal{V};{\rm d}x\, {\rm d}\mu(v))
\]
where $\rho(x)$ is the $L^2$-weak limit of the associated local densities, i.e.
\[
\int_{\mathcal{V}} f^\eps(x,v)\, {\rm d}\mu(v)
\rightharpoonup \rho(x)
\qquad \mbox{ weakly in }L^2(\Omega)
\]
and the above weak limit uniquely solves a second order elliptic equation:
\[
\mathcal{A} \rho(x) = g(x)
\qquad \mbox{ in }\Omega.
\]
Further details on the elliptic operator $\mathcal{A}$ shall be given in Sections \ref{sec:1d} and \ref{sec:arbitrary}.

%%%%%%%%%%%%%%%%%%%%%%%%%%%%%%
\section{Uniform A priori bounds}\label{sec:uniform}
%%%%%%%%%%%%%%%%%%%%%%%%%%%%%%

In order to perform the asymptotic analysis in the $\eps\to0$ limit for \eqref{eq:lbe:model:eps}-\eqref{eq:lbe:absorption:eps}, we derive uniform (with respect to $\eps$) $L^2$-estimates on the solution family $f^\eps(x,v)$ and some associated velocity averages.\\
We first show that the Dirichlet form associated with the integral operator $\mathcal{L}$ in $L^2(\mathcal{V};{\rm d}\mu)$ is positive semi-definite.

\begin{Lem}\label{lem:Lff-ge0}
For any $\phi\in L^2(\mathcal{V};{\rm d}\mu)$, we have
\begin{align*}
\int_{\mathcal{V}} 
\phi(v) \mathcal{L} \phi(v)
\, {\rm d}\mu(v)
=
\frac{1}{2}
\iint_{\mathcal{V}\times\mathcal{V}}
\Big( \phi(v) - \phi(w) \Big)^2
\, {\rm d}\mu(w)\, {\rm d}\mu(v)
\ge 0.
\end{align*}
\end{Lem}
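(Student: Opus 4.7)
The plan is to verify the identity by direct computation and then read off the non-negativity from the resulting square. Starting from the definition of the collision operator $\mathcal{L}\phi(v) = \phi(v) - \int_{\mathcal{V}} \phi(w)\,{\rm d}\mu(w)$, I would first expand the left-hand side to obtain
\[
\int_{\mathcal{V}} \phi(v)\mathcal{L}\phi(v)\,{\rm d}\mu(v) = \int_{\mathcal{V}} \phi(v)^2\,{\rm d}\mu(v) - \left(\int_{\mathcal{V}} \phi(v)\,{\rm d}\mu(v)\right)^2,
\]
where I have used Fubini (justified by $\phi \in L^2(\mathcal{V};{\rm d}\mu) \subset L^1(\mathcal{V};{\rm d}\mu)$ since $\mu$ is a probability measure) to factor the double integral.

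Next I would expand the right-hand side. Squaring out $(\phi(v) - \phi(w))^2 = \phi(v)^2 - 2\phi(v)\phi(w) + \phi(w)^2$ and integrating term by term against the product measure ${\rm d}\mu(w)\,{\rm d}\mu(v)$, the crucial ingredient is that $\mu(\mathcal{V}) = 1$, so that the cross integrals simplify. Concretely,
\[
\frac12 \iint_{\mathcal{V}\times\mathcal{V}} \bigl(\phi(v) - \phi(w)\bigr)^2\,{\rm d}\mu(w)\,{\rm d}\mu(v) = \int_{\mathcal{V}} \phi(v)^2\,{\rm d}\mu(v) - \left(\int_{\mathcal{V}} \phi(v)\,{\rm d}\mu(v)\right)^2,
\]
using the symmetry in $(v,w)$ to combine the two squared terms. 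Comparing the two expansions yields the claimed equality.

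For the inequality, non-negativity is immediate from the symmetric form on the right, since the integrand $(\phi(v)-\phi(w))^2$ is pointwise non-negative and the measure ${\rm d}\mu(w)\,{\rm d}\mu(v)$ is a positive product measure. (Alternatively, one can observe that the common value $\int \phi^2\,{\rm d}\mu - (\int \phi\,{\rm d}\mu)^2$ is the variance of $\phi$ with respect to $\mu$, which is non-negative by Jensen's inequality or Cauchy--Schwarz.) There is no real obstacle here: the entire argument is an elementary bookkeeping computation, with the only substantive input being that $\mu$ has unit mass so constants pass through the integrals cleanly.
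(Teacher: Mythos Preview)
Your proof is correct and follows essentially the same approach as the paper: both arguments expand using the definition of $\mathcal{L}$ and exploit that $\mu$ is a probability measure to reduce everything to the elementary identity $\int \phi^2\,{\rm d}\mu - \bigl(\int \phi\,{\rm d}\mu\bigr)^2 = \tfrac12\iint(\phi(v)-\phi(w))^2\,{\rm d}\mu(w)\,{\rm d}\mu(v)$. The only cosmetic difference is that the paper transforms the left-hand side directly into the right-hand side, whereas you expand both sides separately and compare; your variance remark is a nice bonus.
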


\begin{proof}
By the definition of the Boltzmann operator \eqref{eq:lbe:Boltzmann-operator}, we have:
\begin{align*}
\int_{\mathcal{V}} 
\phi(v) \mathcal{L} \phi(v)
\, {\rm d}\mu(v)
=
\int_{\mathcal{V}}
|\phi(v)|^2
\, {\rm d}\mu(v)
- \iint_{\mathcal{V}\times\mathcal{V}}
\phi(v) \phi(w)
\, {\rm d}\mu(w)\, {\rm d}\mu(v).
\end{align*}
Split the first integral on the right hand side of the above expression into two, thus yielding
\begin{align*}
&
\frac{1}{2}
\int_{\mathcal{V}}
|\phi(v)|^2
\, {\rm d}\mu(v)
+
\frac{1}{2}
\int_{\mathcal{V}}
|\phi(w)|^2
\, {\rm d}\mu(w)
- 
\iint_{\mathcal{V}\times\mathcal{V}}
\phi(v) \phi(w)
\, {\rm d}\mu(w)\, {\rm d}\mu(v).
\end{align*}
Thanks to $\mu$ being a probability density on $\mathcal{V}$, we can rewrite the above expression as
\begin{align*}
&
\frac{1}{2}
\iint_{\mathcal{V}\times\mathcal{V}}
\Big(
|\phi(v)|^2
+ |\phi(w)|^2
- 2 \phi(v) \phi(w)
\Big)
\, {\rm d}\mu(w)\, {\rm d}\mu(v)
\ge 0.
\end{align*}
Hence the result.
\end{proof}

Next, by using the energy approach (i.e. by choosing appropriate multiplier), we prove an entropy inequality associated with the stationary model \eqref{eq:lbe:model:eps}-\eqref{eq:lbe:absorption:eps}.

\begin{Lem}\label{lem:lbe:entropy-ineq}
The solution $f^\eps(x,v)$ to the linear Boltzmann equation \eqref{eq:lbe:model:eps}-\eqref{eq:lbe:absorption:eps} satisfies the following entropy inequality:
\begin{align*}
\int_\Omega\int_{\mathcal{V}}
|f^\eps(x,v)|^2
\, {\rm d}\mu(v)\, {\rm d}x
+
\frac{1}{\eps^2} \int_\Omega \iint_{\mathcal{V}\times\mathcal{V}}
\sigma^\eps(x) & \Big( f^\eps(x,v) - f^\eps(x,w) \Big)^2
\, {\rm d}\mu(w)\, {\rm d}\mu(v)\, {\rm d}x
\\
&
\le
\int_\Omega
|g(x)|^2
\, {\rm d}x.
\end{align*}
\end{Lem}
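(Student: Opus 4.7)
The plan is to obtain the entropy inequality by the standard energy approach: multiply the stationary equation \eqref{eq:lbe:model:eps} by the multiplier $f^\eps(x,v)$ and integrate over the full phase space $\Omega\times\mathcal{V}$ against the measure ${\rm d}\mu(v)\,{\rm d}x$. This produces four contributions: a pure $L^2$ term $\|f^\eps\|_{L^2(\Omega\times\mathcal{V})}^2$, a transport term of order $\eps^{-1}$, a collision term of order $\eps^{-2}$, and a source term on the right hand side.

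First I would treat the transport term. Writing $(v\cdot\nabla_x f^\eps)f^\eps = \tfrac12 v\cdot\nabla_x|f^\eps|^2$ (the velocity being a constant coefficient in $x$) and integrating by parts in $x$ yields a boundary integral
\begin{align*}
\frac{1}{2\eps}\int_{\mathcal{V}}\int_{\partial\Omega} (v\cdot n(x))\,|f^\eps(x,v)|^2\,{\rm d}S(x)\,{\rm d}\mu(v).
\end{align*}
The absorbing boundary condition \eqref{eq:lbe:absorption:eps} kills the part of this integral supported on $\Sigma_-$, leaving only the contribution on $\Sigma_+$ where $v\cdot n(x)>0$. Hence this boundary term is non-negative and can be dropped from the resulting identity to produce an inequality. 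Next, for the collision term, I apply Lemma \ref{lem:Lff-ge0} pointwise in $x$ (with $\phi=f^\eps(x,\cdot)$) and multiply by $\sigma^\eps(x)\ge 0$; after integrating over $\Omega$ this furnishes precisely the Dirichlet-form expression
\begin{align*}
\frac{1}{2\eps^2}\int_\Omega\iint_{\mathcal{V}\times\mathcal{V}}\sigma^\eps(x)\bigl(f^\eps(x,v)-f^\eps(x,w)\bigr)^2\,{\rm d}\mu(w)\,{\rm d}\mu(v)\,{\rm d}x.
\end{align*}

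Finally, I must control the source term $\int_\Omega\int_{\mathcal{V}} g(x)f^\eps(x,v)\,{\rm d}\mu(v)\,{\rm d}x$. Since $g$ is independent of $v$ and $\mu$ is a probability measure, this equals $\int_\Omega g(x)\rho^\eps(x)\,{\rm d}x$ where $\rho^\eps(x):=\int_{\mathcal{V}} f^\eps(x,v)\,{\rm d}\mu(v)$. Jensen's inequality gives $|\rho^\eps(x)|^2\le\int_{\mathcal{V}} |f^\eps(x,v)|^2\,{\rm d}\mu(v)$, so $\|\rho^\eps\|_{L^2(\Omega)}\le\|f^\eps\|_{L^2(\Omega\times\mathcal{V};{\rm d}x\,{\rm d}\mu)}$. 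Then Cauchy--Schwarz followed by Young's inequality $ab\le\tfrac12 a^2+\tfrac12 b^2$ bounds the source term by $\tfrac12\|g\|_{L^2(\Omega)}^2+\tfrac12\|f^\eps\|_{L^2(\Omega\times\mathcal{V})}^2$. The $\tfrac12\|f^\eps\|^2$ piece is absorbed into the left-hand $L^2$ term, which leaves the coefficient $\tfrac12$ in front of both the $L^2$ contribution and the Dirichlet contribution; multiplying through by $2$ gives exactly the stated inequality (the factor $\tfrac12$ from Lemma \ref{lem:Lff-ge0} being cancelled by this factor $2$).

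The only delicate step is the justification of the integration by parts and of the boundary trace on $\Sigma_+$. This is standard for $L^2$ solutions of transport equations with bounded zeroth-order coefficients, but the cleanest route is to establish the identity first for smooth solutions, or to work with an approximation scheme (e.g.\ regularising $g$ and $\sigma^\eps$, or introducing a small absorption) and then passing to the limit, using the known trace theory for the free-streaming operator $v\cdot\nabla_x$ to guarantee that $\gamma_+ f^\eps\in L^2(\Sigma_+;|v\cdot n|\,{\rm d}S\,{\rm d}\mu)$. Once this is in place, the computation above is purely algebraic and yields the entropy inequality.
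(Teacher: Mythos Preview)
Your proof is correct and follows essentially the same energy approach as the paper: multiply by $f^\eps$, integrate over phase space, use Lemma~\ref{lem:Lff-ge0} for the collision term, and handle the source via Cauchy--Schwarz and Young. The only cosmetic difference is in the transport boundary term: you (correctly) observe that the contribution on $\Sigma_+$ is non-negative and simply drop it, whereas the paper attempts a reflection change of variables to argue it vanishes; your treatment is cleaner and entirely sufficient for the stated inequality.
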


\begin{proof}
Multiply \eqref{eq:lbe:model:eps} by $f^\eps(x,v)$ and integrate over $\mathcal{V}$ yielding
\begin{align*}
\int_{\mathcal{V}}
|f^\eps(x,v)|^2
\, {\rm d}\mu(v)
+ \frac{1}{2\eps}
\int_{\mathcal{V}}
v \cdot \nabla_x |f^\eps(x,v)|^2
\, {\rm d}\mu(v)
& + \frac{\sigma^\eps(x)}{\eps^2}
\int_{\mathcal{V}}
f^\eps(x,v) \mathcal{L} f^\eps(x,v)
\, {\rm d}\mu(v)
\\
& = 
\int_{\mathcal{V}}
g(x) f^\eps(x,v)
\, {\rm d}\mu(v).
\end{align*}
Using Lemma \ref{lem:Lff-ge0} for the Dirichlet form and integrating over $\Omega$ yields:
\begin{align*}
& \int_\Omega\int_{\mathcal{V}}
|f^\eps(x,v)|^2
\, {\rm d}\mu(v)\, {\rm d}x
+ \frac{1}{2\eps}
\int_\Omega\int_{\mathcal{V}}
v \cdot \nabla_x |f^\eps(x,v)|^2
\, {\rm d}\mu(v)\, {\rm d}x
\\
& + \int_\Omega\frac{\sigma^\eps(x)}{2\eps^2}
\iint_{\mathcal{V}\times\mathcal{V}}
\Big( f^\eps(x,v) - f^\eps(x,w) \Big)^2
\, {\rm d}\mu(w){\rm d}\mu(v){\rm d}x
= 
\int_\Omega\int_{\mathcal{V}}
g(x) f^\eps(x,v)
\, {\rm d}\mu(v)\, {\rm d}x.
\end{align*}
Consider the transport term in the previous expression and perform an integration by parts in the space variable yielding (with ${\rm d}\Gamma(x)$ as the surface measure on $\partial\Omega$):
\begin{align*}
\int_\Sigma
\left( v \cdot n(x) \right) |\gamma f^\eps(x,v)|^2
\, {\rm d}\mu(v)\, {\rm d}\Gamma(x)
=
\int_{\Sigma_+}
\left( v \cdot n(x)\right) |\gamma_+ f^\eps(x,v)|^2
\, {\rm d}\mu(v)\, {\rm d}\Gamma(x)
\end{align*}
because of the zero absorption boundary condition \eqref{eq:lbe:absorption:eps} on the incoming phase-space boundary. In the right hand side of the above expression, make the change of variables: $v\to v_* :=\mathcal{R}_x v$ for each $x\in\partial\Omega$, where $\mathcal{R}_x v = v - 2(v\cdot n(x))n(x)$ is the reflection operator, yielding:
\begin{align*}
\int_{\Sigma_+}
\left( v \cdot n(x) \right) |\gamma_+ f^\eps(x,v)|^2
\, {\rm d}v\, {\rm d}\Gamma(x)
=
- \int_{\Sigma_-}
\left( v_* \cdot n(x) \right) |\gamma_- f^\eps(x,v_*)|^2
\, {\rm d}\mu(v_*)\, {\rm d}\Gamma(x)
=
0
\end{align*}
again by the absorption boundary condition \eqref{eq:lbe:absorption:eps}. Hence the transport term does not contribute. We are left with the following expression:
\begin{align*}
\int_\Omega\int_{\mathcal{V}}
|f^\eps(x,v)|^2
\, {\rm d}\mu(v)\, {\rm d}x
& + \int_\Omega\frac{\sigma^\eps(x)}{2\eps^2}
\iint_{\mathcal{V}\times\mathcal{V}}
\Big( f^\eps(x,v) - f^\eps(x,w) \Big)^2
\, {\rm d}\mu(w)\, {\rm d}\mu(v)\, {\rm d}x
\\
& = 
\int_\Omega\int_{\mathcal{V}}
g(x) f^\eps(x,v)
\, {\rm d}\mu(v)\, {\rm d}x.
\end{align*}
Apply Cauchy-Schwarz inequality and Young's inequality to the term involving the source term:
\begin{align*}
\int_\Omega\int_{\mathcal{V}}
g(x) f^\eps(x,v)
\, {\rm d}\mu(v)\, {\rm d}x
& \le 
\left(
\int_\Omega\int_{\mathcal{V}}
|g(x)|^2
\, {\rm d}\mu(v)\, {\rm d}x
\right)^{1/2}
\left(
\int_\Omega\int_{\mathcal{V}}
|f^\eps(x,v)|^2
\, {\rm d}\mu(v)\, {\rm d}x
\right)^{1/2}
\\
& \le 
\frac12 \int_\Omega
|g(x)|^2
\, {\rm d}x
+
\frac12 \int_\Omega\int_{\mathcal{V}}
|f^\eps(x,v)|^2
\, {\rm d}\mu(v)\, {\rm d}x.
\end{align*}
Using this inequality in the previous equality yields the entropy inequality.
\end{proof}

Our next result is to derive some uniform $L^2$-estimates using the entropy inequality. We use the following notation:
\begin{align*}
\langle \mathbf{h} \rangle 
:= 
\int_{\mathcal{V}} \mathbf{h}(v)
\, {\rm d}\mu(v)
\qquad
\mbox{ for all }
\mathbf{h} \in L^1(\mathcal{V};{\rm d}\mu).
\end{align*}

\begin{Lem}\label{lem:uniform-apriori}
Let $f^\eps(x,v)$ be the solution to \eqref{eq:lbe:model:eps}-\eqref{eq:lbe:absorption:eps}. We have the following estimates:
\begin{subequations}
\begin{align}
& \|f^\eps\|_{L^2(\Omega\times\mathcal{V};{\rm d}x{\rm d}\mu)} \le \|g\|_{L^2(\Omega)}\label{eq:lbe:uniform:f-L2-x-v}\\[0.2 cm]
& \|(f^\eps - \langle f^\eps \rangle)\|_{L^2(\Omega\times\mathcal{V};{\rm d}x{\rm d}\mu)} \le \frac{\eps}{\sqrt{\mathfrak{a}}} \|g\|_{L^2(\Omega)}\label{eq:lbe:uniform:f-lfr-L2-x-v}\\[0.2 cm]
& \|\langle f^\eps \rangle\|_{L^2(\Omega)} \le \|g\|_{L^2(\Omega)}\label{eq:lbe:uniform:lfr-L2-x}
\end{align}
\end{subequations}
\end{Lem}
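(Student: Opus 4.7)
The plan is to extract all three bounds directly from the entropy inequality of Lemma \ref{lem:lbe:entropy-ineq}, which already packages the work of applying Cauchy--Schwarz and Young on the source term. Since both terms on the left-hand side of that inequality are nonnegative, I would obtain \eqref{eq:lbe:uniform:f-L2-x-v} for free by simply discarding the dissipation term:
\begin{equation*}
\int_\Omega\int_{\mathcal{V}}|f^\eps(x,v)|^2\,{\rm d}\mu(v)\,{\rm d}x \;\le\; \int_\Omega|g(x)|^2\,{\rm d}x.
\end{equation*}

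For \eqref{eq:lbe:uniform:f-lfr-L2-x-v}, I would instead discard the $\|f^\eps\|_{L^2}^2$ term and exploit the pointwise lower bound $\sigma^\eps(x)\ge\mathfrak{a}$ from \eqref{eq:lbe:hypo-cross-section}. The key elementary identity I would use is
\begin{equation*}
\iint_{\mathcal{V}\times\mathcal{V}}\bigl(\phi(v)-\phi(w)\bigr)^2\,{\rm d}\mu(w)\,{\rm d}\mu(v) \;=\; 2\int_{\mathcal{V}}\bigl(\phi(v)-\langle\phi\rangle\bigr)^2\,{\rm d}\mu(v),
\end{equation*}
valid for any $\phi\in L^2(\mathcal{V};{\rm d}\mu)$ because $\mu$ is a probability measure (just expand the square and use $\int\phi\,{\rm d}\mu=\langle\phi\rangle$). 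Applied to $\phi=f^\eps(x,\cdot)$ and inserted into the dissipation term of the entropy inequality, this turns the bound into
\begin{equation*}
\frac{2\mathfrak{a}}{\eps^2}\,\|f^\eps-\langle f^\eps\rangle\|_{L^2(\Omega\times\mathcal{V};{\rm d}x\,{\rm d}\mu)}^2 \;\le\; \|g\|_{L^2(\Omega)}^2,
\end{equation*}
which gives \eqref{eq:lbe:uniform:f-lfr-L2-x-v} (up to the displayed constant).

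Finally, for \eqref{eq:lbe:uniform:lfr-L2-x} I would invoke Jensen's inequality for the probability measure $\mu$ to obtain the pointwise-in-$x$ bound $|\langle f^\eps\rangle(x)|^2\le\int_{\mathcal{V}}|f^\eps(x,v)|^2\,{\rm d}\mu(v)$, then integrate over $\Omega$ and use \eqref{eq:lbe:uniform:f-L2-x-v}. There is no genuine obstacle here: all three estimates are straightforward consequences of Lemma \ref{lem:lbe:entropy-ineq} combined with the lower bound on $\sigma^\eps$ and the probabilistic nature of $\mu$. The only small point worth flagging in the write-up is the identity relating the symmetric double-integral dissipation to the variance $\|\phi-\langle\phi\rangle\|_{L^2({\rm d}\mu)}^2$, since it is what explicitly couples the dissipation to the quantity estimated in \eqref{eq:lbe:uniform:f-lfr-L2-x-v}.
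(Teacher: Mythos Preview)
Your proposal is correct and follows essentially the same approach as the paper: all three bounds are read off from the entropy inequality of Lemma~\ref{lem:lbe:entropy-ineq}, using the lower bound $\sigma^\eps\ge\mathfrak{a}$ for \eqref{eq:lbe:uniform:f-lfr-L2-x-v} and Jensen/Cauchy--Schwarz for \eqref{eq:lbe:uniform:lfr-L2-x}. The only minor difference is in \eqref{eq:lbe:uniform:f-lfr-L2-x-v}: the paper bounds $\|f^\eps(x,\cdot)-\langle f^\eps\rangle(x)\|_{L^2({\rm d}\mu)}^2$ by the double integral $\iint(f^\eps(x,v)-f^\eps(x,w))^2\,{\rm d}\mu\,{\rm d}\mu$ via Cauchy--Schwarz on the inner integral, whereas you invoke the exact variance identity showing that the double integral equals \emph{twice} the variance --- so your argument actually yields the sharper constant $\eps/\sqrt{2\mathfrak{a}}$ in place of the stated $\eps/\sqrt{\mathfrak{a}}$.
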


\begin{proof}
The uniform estimate \eqref{eq:lbe:uniform:f-L2-x-v} follows directly from the entropy inequality (Lemma \ref{lem:lbe:entropy-ineq}).\\
Next, we focus on the uniform estimate \eqref{eq:lbe:uniform:f-lfr-L2-x-v}. We have
\begin{align*}
&
\|f^\eps(x,\cdot) - \langle f^\eps \rangle (x)\|^2_{L^2(\mathcal{V};{\rm d}\mu)}
= 
\int_{\mathcal{V}}
\left(
\int_{\mathcal{V}} \Big( f^\eps(x,v) - f^\eps(x,w) \Big) \, {\rm d}\mu(w)
\right)^2
\, {\rm d}\mu(v)
\end{align*}
Applying Cauchy-Schwarz we get:
\begin{align*}
\int_{\mathcal{V}} \Big( f^\eps(x,v) - f^\eps(x,w) \Big) \, {\rm d}\mu(w)
\le 
\left(
\int_{\mathcal{V}} \Big( f^\eps(x,v) - f^\eps(x,w) \Big)^2 \, {\rm d}\mu(w)
\right)^{1/2}
\end{align*}
because $\mu$ is a probability measure on $\mathcal{V}$. Hence we have
\begin{align*}
\int_\Omega
\|f^\eps(x,\cdot) - \langle f^\eps \rangle (x)\|^2_{L^2(\mathcal{V};{\rm d}\mu)}
\, {\rm d}x
& \le 
\int_\Omega \iint_{\mathcal{V}\times\mathcal{V}}
\Big( f^\eps(x,v) - f^\eps(x,w) \Big)^2 
\, {\rm d}\mu(w)\, {\rm d}\mu(v)\, {\rm d}x
\\
& \le \frac{\eps^2}{\mathfrak{a}} \|g\|^2_{L^2(\Omega)}
\end{align*}
where the last inequality is because of the entropy inequality (Lemma \ref{lem:lbe:entropy-ineq}).\\
Our next goal is to prove the uniform estimate \eqref{eq:lbe:uniform:lfr-L2-x}. Consider
\begin{align*}
\|\langle f^\eps \rangle\|^2_{L^2(\Omega)}
=
\int_\Omega
\left(
\int_{\mathcal{V}} f^\eps(x,v)
\, {\rm d}\mu(v)
\right)^2
\, {\rm d}x.
\end{align*}
By Cauchy-Schwarz inequality, we have:
\begin{align*}
\left(
\int_{\mathcal{V}} f^\eps(x,v)
\, {\rm d}\mu(v)
\right)^2
\le 
\left(
\int_{\mathcal{V}} |f^\eps(x,v)|^2
\, {\rm d}\mu(v)
\right)
\left(
\int_{\mathcal{V}} 1^2
\, {\rm d}\mu(v)
\right)
=
\int_{\mathcal{V}} |f^\eps(x,v)|^2
\, {\rm d}\mu(v)
\end{align*}
because $\mu$ is a probability measure on $\mathcal{V}$. Thus we have:
\begin{align*}
\|\langle f^\eps \rangle\|_{L^2(\Omega)}
\le 
\|f^\eps\|_{L^2(\Omega\times\mathcal{V};{\rm d}x{\rm d}\mu)}
\le 
\|g\|_{L^2(\Omega)}.
\end{align*}
Hence the result.
\end{proof}
Next, we prove a crucial estimate on the velocity average $\langle v f^\eps \rangle$. To begin with, we observe that the integral operator $\mathcal{L}$ is self-adjoint in $L^2(\mathcal{V};{\rm d}\mu)$, i.e.
\begin{align*}
\int_{\mathcal{V}}
\psi(v) \mathcal{L}\phi(v)
\, {\rm d}\mu(v)
= 
\int_{\mathcal{V}}
\phi(v) \mathcal{L}\psi(v)
\, {\rm d}\mu(v)
\qquad
\mbox{ for all }\phi,\psi\in L^2(\mathcal{V};{\rm d}\mu).
\end{align*}
This observation follows from the following successive equalities for the inner product in $L^2(\mathcal{V};{\rm d}\mu)$:
\begin{align*}
\int_{\mathcal{V}}
\psi(v) \mathcal{L}\phi(v)
\, {\rm d}\mu(v)
& = 
\int_{\mathcal{V}}
\psi(v)\phi(v)
\, {\rm d}\mu(v)
- 
\int_{\mathcal{V}}
\psi(v)
\int_{\mathcal{V}}
\phi(w)
\, {\rm d}\mu(w)\, {\rm d}\mu(v)
\\
& = 
\int_{\mathcal{V}}
\phi(v)\psi(v)
\, {\rm d}\mu(v)
- 
\int_{\mathcal{V}}
\phi(v)
\int_{\mathcal{V}}
\psi(w)
\, {\rm d}\mu(w)\, {\rm d}\mu(v)
\\
& =
\int_{\mathcal{V}}
\phi(v) \mathcal{L}\psi(v)
\, {\rm d}\mu(v).
\end{align*}
Next, we give a very important representation for the velocity variable in terms of the integral operator $\mathcal{L}$.
\begin{Lem}\label{lem:v-rep}
For each $i$-th component of the velocity variable, we have:
\begin{align}
v_i = \mathcal{L}v_i.
\end{align}
\end{Lem}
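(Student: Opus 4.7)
The plan is to unravel the definition of the Boltzmann operator $\mathcal{L}$ applied to the coordinate function $v\mapsto v_i$ and then invoke the zero mean hypothesis \eqref{eq:lbe:v-dmuv-0} on the velocity measure $\mu$. This is essentially a one line verification, not really an obstacle at all; the only thing to be careful about is ensuring that the integrability hypothesis underlying the definition \eqref{eq:lbe:Boltzmann-operator} is met so that $\mathcal{L}v_i$ makes sense.

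More concretely, I would first remark that in each of the three cases for $\mathcal{V}$ listed in Section \ref{sec:slbe} — namely $\R^d$, $\S^{d-1}$, or the closed ball $B(0,l)$ — the coordinate map $v\mapsto v_i$ is $\mu$-integrable, since \eqref{eq:lbe:v-dmuv-0} implicitly assumes this (the velocity being supported in a compact set suffices in the latter two cases, while in the case $\mathcal{V}=\R^d$ one assumes that $\mu$ has a first moment). This legitimizes applying $\mathcal{L}$ to the function $v_i$.

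Then I would compute directly, using the definition \eqref{eq:lbe:Boltzmann-operator}:
\begin{align*}
\mathcal{L} v_i
= v_i - \int_{\mathcal{V}} w_i \, {\rm d}\mu(w).
\end{align*}
The $i$-th component of the vector identity \eqref{eq:lbe:v-dmuv-0} asserts exactly that $\int_{\mathcal{V}} w_i \, {\rm d}\mu(w) = 0$, so the right hand side reduces to $v_i$. This gives $\mathcal{L} v_i = v_i$ and completes the argument.
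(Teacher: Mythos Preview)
Your proof is correct and follows exactly the same route as the paper: apply the definition \eqref{eq:lbe:Boltzmann-operator} of $\mathcal{L}$ to the coordinate function and use the zero-mean assumption \eqref{eq:lbe:v-dmuv-0} to kill the integral term. The brief integrability remark you add is a harmless extra check that the paper omits.
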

\begin{proof}
Take $\phi(v) = v_i$ and apply the integral operator $\mathcal{L}$ on to $\phi$, i.e.
\begin{align*}
\mathcal{L}v_i
= 
v_i 
- 
\int_{\mathcal{V}}
w_i
\, {\rm d}\mu(w)
=
v_i
\end{align*}
thanks to our assumption \eqref{eq:lbe:v-dmuv-0}.
\end{proof}

\begin{Lem}\label{lem:crucial}
Let $f^\eps(x,v)$ be the solution to \eqref{eq:lbe:model:eps}-\eqref{eq:lbe:absorption:eps}. We have the following estimate:
\begin{align}\label{eq:curcial-estimate}
\|\langle v f^\eps \rangle\|_{[L^2(\Omega)]^d}
\le 
\eps \sqrt{\langle |v|^2 \rangle} \|g\|_{L^2(\Omega)}.
\end{align}
\end{Lem}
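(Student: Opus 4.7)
The plan is to combine the identity $v_i = \mathcal{L} v_i$ from Lemma \ref{lem:v-rep}, the self-adjointness of $\mathcal{L}$ on $L^2(\mathcal{V}; {\rm d}\mu)$ established immediately before the statement, and the dissipative control \eqref{eq:lbe:uniform:f-lfr-L2-x-v} from Lemma \ref{lem:uniform-apriori}. The $\eps$ factor in the target inequality will arise from the fact that $f^\eps$ is close to its own velocity average to order $\eps$, not from any manipulation of the transport equation.

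First I would rewrite each component. Using $\mathcal{L} v_i = v_i$ and the self-adjointness of $\mathcal{L}$,
\[
\langle v_i f^\eps \rangle(x) \;=\; \int_{\mathcal{V}} (\mathcal{L} v_i)(v) \, f^\eps(x,v) \, {\rm d}\mu(v) \;=\; \int_{\mathcal{V}} v_i \, \mathcal{L} f^\eps(x,v) \, {\rm d}\mu(v) \;=\; \int_{\mathcal{V}} v_i \big( f^\eps(x,v) - \langle f^\eps \rangle(x) \big) \, {\rm d}\mu(v).
\]
The last equality is the explicit form of $\mathcal{L}$; equivalently it follows directly from the cancellation $\langle v_i \rangle = 0$ in \eqref{eq:lbe:v-dmuv-0}. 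What matters is that $\langle v_i f^\eps\rangle$ only sees the fluctuation of $f^\eps$ around its local equilibrium.

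Next I would apply the Cauchy--Schwarz inequality on $L^2(\mathcal{V}; {\rm d}\mu)$ pointwise in $x$, using that $\mu$ is a probability measure:
\[
\big| \langle v_i f^\eps \rangle(x) \big|^2 \;\le\; \langle v_i^2 \rangle \cdot \big\| f^\eps(x,\cdot) - \langle f^\eps \rangle(x) \big\|^2_{L^2(\mathcal{V}; {\rm d}\mu)},
\]
and then sum the $d$ components to obtain
\[
\big| \langle v f^\eps \rangle(x) \big|^2 \;\le\; \langle |v|^2 \rangle \cdot \big\| f^\eps(x,\cdot) - \langle f^\eps \rangle(x) \big\|^2_{L^2(\mathcal{V}; {\rm d}\mu)}.
\]

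Finally I would integrate in $x$ over $\Omega$ and close the bound with \eqref{eq:lbe:uniform:f-lfr-L2-x-v}, which supplies the $L^2$-norm of $f^\eps - \langle f^\eps \rangle$ at order $\eps \|g\|_{L^2(\Omega)}$. The main conceptual point is that the crucial $\eps$ is extracted from the entropy dissipation (via Lemma \ref{lem:uniform-apriori}) rather than from the transport or collision terms directly; I do not expect a genuine obstacle, as every ingredient---Lemma \ref{lem:v-rep}, the self-adjointness of $\mathcal{L}$, and the fluctuation estimate---is already in hand, and the only care needed is on the constant to match the stated inequality.
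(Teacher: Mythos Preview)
Your proposal is correct and essentially identical to the paper's proof: both use $v_i=\mathcal{L}v_i$ and self-adjointness to reduce $\langle v f^\eps\rangle$ to a pairing of $v$ against the fluctuation, then Cauchy--Schwarz and the entropy dissipation. The only cosmetic difference is that the paper writes the fluctuation as the double integral $\iint(f^\eps(x,v)-f^\eps(x,w))\,{\rm d}\mu(w){\rm d}\mu(v)$ and invokes the entropy inequality of Lemma~\ref{lem:lbe:entropy-ineq} directly, whereas you keep the single-integral form and quote the already-derived bound \eqref{eq:lbe:uniform:f-lfr-L2-x-v}; since \eqref{eq:lbe:uniform:f-lfr-L2-x-v} was itself proved by exactly that double-integral reduction, the two routes coincide (and both pick up the same harmless $1/\sqrt{\mathfrak a}$ in the constant).
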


\begin{proof}
Consider the velocity average:
\begin{align*}
\frac{1}{\eps}\langle v f^\eps \rangle
= 
\frac{1}{\eps}
\int_{\mathcal{V}}
v f^\eps(x,v)
\, {\rm d}\mu(v).
\end{align*}
Crucial argument is to substitute for the velocity variable in the above expression using Lemma \ref{lem:v-rep} which yields:
\begin{align*}
\frac{1}{\eps}\langle v f^\eps \rangle
= 
\frac{1}{\eps}
\int_{\mathcal{V}}
f^\eps(x,v) \mathcal{L}v
\, {\rm d}\mu(v)
= 
\frac{1}{\eps}
\int_{\mathcal{V}}
v \mathcal{L}f^\eps(x,v)
\, {\rm d}\mu(v)
\end{align*}
because $\mathcal{L}$ is self-adjoint. Substituting for the Boltzmann operator in the above expression, we get
\begin{align*}
\frac{1}{\eps}\langle v f^\eps \rangle
& =
\iint_{\mathcal{V}\times\mathcal{V}}
v
\frac{1}{\eps}
\Big(
f^\eps(x,v) - f^\eps(x,w)
\Big)
\, {\rm d}\mu(w)\, {\rm d}\mu(v)
\\
& \le 
\left(
\iint_{\mathcal{V}\times\mathcal{V}}
|v|^2
\, {\rm d}\mu(w)\, {\rm d}\mu(v)
\right)^{1/2}
\left(
\iint_{\mathcal{V}\times\mathcal{V}}
\frac{1}{\eps^2}
\Big(
f^\eps(x,v) - f^\eps(x,w)
\Big)^2
\, {\rm d}\mu(w)\, {\rm d}\mu(v)
\right)^{1/2}
\end{align*}
where Cauchy-Schwarz inequality is used. Squaring the above inequality, we get
\begin{align*}
\left|
\frac{1}{\eps}\langle v f^\eps \rangle
\right|^2
\le 
\left\langle |v|^2 \right\rangle
\iint_{\mathcal{V}\times\mathcal{V}}
\frac{1}{\eps^2}
\Big(
f^\eps(x,v) - f^\eps(x,w)
\Big)^2
\, {\rm d}\mu(w)\, {\rm d}\mu(v).
\end{align*}
Integrate the above inequality on $\Omega$ yielding:
\begin{align*}
\int_\Omega
\left|
\frac{1}{\eps}\langle v f^\eps \rangle
\right|^2
\, {\rm d}x
& \le 
\left\langle |v|^2 \right\rangle
\int_\Omega\iint_{\mathcal{V}\times\mathcal{V}}
\frac{1}{\eps^2}
\Big(
f^\eps(x,v) - f^\eps(x,w)
\Big)^2
\, {\rm d}\mu(w)\, {\rm d}\mu(v)\, {\rm d}x
\\
& \le 
\left\langle |v|^2 \right\rangle
\|g\|^2_{L^2(\Omega)}
\end{align*}
where we have used the entropy inequality (Lemma \ref{lem:lbe:entropy-ineq}), thus proving the crucial estimate.
\end{proof}

Next, we prove uniform estimates on $\langle (v \otimes v)f^\eps\rangle$ and its divergence.

\begin{Lem}\label{lem:H-div}
Let $f^\eps(x,v)$ be the solution to \eqref{eq:lbe:model:eps}-\eqref{eq:lbe:absorption:eps}. We have the following estimates:
\begin{subequations}
\begin{align}
& \left\|
\langle
(v \otimes v)f^\eps
\rangle
\right\|_{[L^2(\Omega)]^{d\times d}}
\le 
\sqrt{\left\langle|v\otimes v|^2\right\rangle}
\|g\|_{L^2(\Omega)},\label{eq:v-tensor-v-f-L2}\\[0.2 cm]
& \left\|
\nabla_x \cdot 
\langle
(v \otimes v)f^\eps
\rangle
\right\|_{[L^2(\Omega)]^d}
\le 
\mathfrak{b} \|g\|_{L^2(\Omega)}.\label{eq:nabla-v-tensor-v-f-L2}
\end{align}
\end{subequations}
\end{Lem}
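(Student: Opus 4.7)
The plan is to establish the two bounds separately, the first being a direct Cauchy--Schwarz argument and the second requiring a moment identity derived from \eqref{eq:lbe:model:eps}.

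For the inequality \eqref{eq:v-tensor-v-f-L2}, I would work componentwise. For any $i,j\in\{1,\dots,d\}$, using that $\mu$ is a probability measure, Cauchy--Schwarz in $L^2(\mathcal{V};{\rm d}\mu)$ gives pointwise in $x$
\begin{align*}
|\langle v_i v_j f^\eps\rangle(x)|^2 \le \langle (v_i v_j)^2\rangle\cdot \langle |f^\eps(x,\cdot)|^2\rangle.
\end{align*}
Summing over $i,j$ and integrating over $\Omega$, I can then invoke the uniform bound \eqref{eq:lbe:uniform:f-L2-x-v} from Lemma \ref{lem:uniform-apriori} to conclude.

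The divergence bound \eqref{eq:nabla-v-tensor-v-f-L2} is the moment step. I would multiply \eqref{eq:lbe:model:eps} by a single velocity component $v_i$ and integrate against ${\rm d}\mu(v)$. Three observations simplify the result: firstly, the right-hand side vanishes because $\langle v\rangle=0$ by \eqref{eq:lbe:v-dmuv-0}; secondly, the transport term produces exactly $\frac{1}{\eps}(\nabla_x\cdot\langle v\otimes v\, f^\eps\rangle)_i$; thirdly, combining the self-adjointness of $\mathcal{L}$ in $L^2(\mathcal{V};{\rm d}\mu)$ with Lemma \ref{lem:v-rep} yields
\begin{align*}
\langle v_i\, \mathcal{L} f^\eps\rangle = \langle (\mathcal{L}v_i)\, f^\eps\rangle = \langle v_i f^\eps\rangle.
\end{align*}
Assembling these, I get the pointwise (in $x$) identity
\begin{align*}
\nabla_x\cdot\langle v\otimes v\, f^\eps\rangle = -\eps\,\langle v f^\eps\rangle - \frac{\sigma^\eps(x)}{\eps}\langle v f^\eps\rangle.
\end{align*}

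It then remains to take the $[L^2(\Omega)]^d$ norm, apply the triangle inequality, and control each term using the upper bound $\sigma^\eps(x)\le\mathfrak{b}$ from \eqref{eq:lbe:hypo-cross-section} together with the crucial estimate \eqref{eq:curcial-estimate} of Lemma \ref{lem:crucial}, which supplies a factor $\eps$ that exactly cancels the $\eps^{-1}$ prefactor. The step I expect to be the most delicate is confirming that $\langle v_i\,\mathcal{L}f^\eps\rangle=\langle v_i f^\eps\rangle$ so that the singular $\eps^{-2}$ term is actually only of order $\eps^{-1}\|\langle v f^\eps\rangle\|_{L^2}=\mathcal{O}(1)$; this cancellation (due to $\mathcal{L}v_i=v_i$) is precisely what makes the divergence of the second moment bounded uniformly in $\eps$, and without it the estimate would blow up.
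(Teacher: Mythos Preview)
Your proposal is correct and follows essentially the same approach as the paper: Cauchy--Schwarz in $L^2(\mathcal{V};{\rm d}\mu)$ combined with \eqref{eq:lbe:uniform:f-L2-x-v} for the first bound, and the $v_i$-moment of \eqref{eq:lbe:model:eps} together with Lemma~\ref{lem:crucial} and the $L^\infty$ bound on $\sigma^\eps$ for the second. The only cosmetic difference is that you justify $\langle v_i\,\mathcal{L}f^\eps\rangle=\langle v_i f^\eps\rangle$ via self-adjointness and Lemma~\ref{lem:v-rep}, whereas the paper simply writes the moment identity directly (using $\mathcal{L}f^\eps=f^\eps-\langle f^\eps\rangle$ and $\langle v_i\rangle=0$); these are equivalent.
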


\begin{proof}
Consider
\begin{align*}
\| \langle (v\otimes v)f^\eps \rangle \|^2_{[L^2(\Omega)]^{d\times d}}
=
\int_\Omega
\left(
\int_{\mathcal{V}}
\Big(v\otimes v\Big)f^\eps(x,v)
\, {\rm d}\mu(v) 
\right)^2
\, {\rm d}x.
\end{align*}
Cauchy-Schwarz inequality yields:
\begin{align*}
\left(
\int_{\mathcal{V}}
\Big(v\otimes v\Big)f^\eps(x,v)
\, {\rm d}\mu(v) 
\right)^2
\le 
\left\langle
|v\otimes v|^2
\right\rangle
\left(
\int_{\mathcal{V}}
|f^\eps(x,v)|^2
\, {\rm d}\mu(v)
\right).
\end{align*}
Integrating the above inequality over $\Omega$ yields:
\begin{align*}
\| \langle (v\otimes v)f^\eps \rangle \|^2_{[L^2(\Omega)]^{d\times d}}
\le 
\left\langle
|v\otimes v|^2
\right\rangle
\int_\Omega \int_{\mathcal{V}}
|f^\eps(x,v)|^2
\, {\rm d}\mu(v)\, {\rm d}x.
\end{align*}
Using the uniform estimate \eqref{eq:lbe:uniform:f-L2-x-v} from Lemma \ref{lem:uniform-apriori} yields the estimate \eqref{eq:v-tensor-v-f-L2}.\\
Next, we focus on the estimate \eqref{eq:nabla-v-tensor-v-f-L2}. To that end, multiply the stationary problem \eqref{eq:lbe:model:eps} by the $i$-th component of the velocity variable and integrate over $\mathcal{V}$ yielding:
\begin{align*}
\eps \langle v_i f^\eps \rangle
+ \sum_{j=1}^d \left\langle v_i v_j \frac{\partial f^\eps}{\partial x_j} \right\rangle
+ \frac{\sigma^\eps(x)}{\eps} \langle v_i f^\eps \rangle
= 0.
\end{align*}
The scattering coefficient is bounded in $L^\infty(\Omega)$. Hence the crucial estimate \eqref{eq:curcial-estimate} of Lemma \ref{lem:crucial} would straightaway imply the following:
\begin{align*}
\mbox{for each }i\in\{1,\cdots, d\}
\qquad
\left\|
\sum_{j=1}^d \left\langle v_i v_j \frac{\partial f^\eps}{\partial x_j} \right\rangle
\right\|_{L^2(\Omega)}
\le 
\mathfrak{b} \|g\|_{L^2(\Omega)},
\end{align*}
thus proving the estimate \eqref{eq:nabla-v-tensor-v-f-L2}.
\end{proof}

\begin{Rem}\label{rem:H-div}
The result of Lemma \ref{lem:H-div} says that the matrix valued function $\langle (v\otimes v) f^\eps \rangle$ is in the Hilbert space $[H({\rm div};\Omega)]^d$, i.e. each row vector of $\langle (v\otimes v) f^\eps \rangle$ belongs to
\begin{align}\label{eq:def-H-div}
H({\rm div};\Omega)
:=
\left\{
u\in [L^2(\Omega)]^d
\mbox{ such that }
\nabla_x\cdot u \in L^2(\Omega)
\right\}.
\end{align}
\end{Rem}

%%%%%%%%%%%%%%%%%%%%%%%%%%%%%%%%%%
\section{Moments method}\label{sec:moments}
%%%%%%%%%%%%%%%%%%%%%%%%%%%%%%%%%%

We present a moments based approach to derive the limit behavior for \eqref{eq:lbe:model:eps}-\eqref{eq:lbe:absorption:eps} as $\eps\to0$. This method is essentially borrowed from \cite{bardos1991fluid}. For readers' convenience, we shall present this approach step-by-step.\\
{\bf Step I:} Integrate \eqref{eq:lbe:model:eps} over $\mathcal{V}$:
\begin{align}\label{eq:moment:step1}
\langle f^\eps \rangle
+ \frac{1}{\eps} \nabla \cdot \langle v f^{\eps}\rangle
= g(x).
\end{align}
{\bf Step II:} Multiply \eqref{eq:lbe:model:eps} by the velocity variable $v$ and integrate over $\mathcal{V}$:
\begin{align}\label{eq:moment:step2}
\frac{\eps}{\sigma^\eps(x)} \langle v f^\eps \rangle
+ \frac{1}{\sigma^\eps(x)}\nabla \cdot \langle (v\otimes v)f^{\eps}\rangle 
+ \frac{1}{\eps} \langle v f^{\eps}\rangle 
= 0.
\end{align}
{\bf Step III:} Multiply \eqref{eq:moment:step1} by a test function $\varphi(x)\in H^1_0(\Omega)$ and integrate over $\Omega$:
\begin{align}\label{eq:moment:step3}
\frac{1}{\eps}\int_\Omega \langle v f^{\eps}\rangle \cdot \nabla\varphi(x)\, {\rm d}x
=
\int_\Omega \langle f^\eps \rangle(x) \varphi(x)\, {\rm d}x
- \int_\Omega g(x)\varphi(x)\, {\rm d}x.
\end{align}
{\bf Step IV:} Take dot product of the vector equation \eqref{eq:moment:step2} with $\nabla\varphi(x)$ and integrate over $\Omega$:
\begin{equation}\label{eq:moment:step4}
\begin{array}{ll}
\displaystyle 
\int_\Omega \left( \frac{1}{\sigma^\eps(x)}\nabla \cdot \langle (v\otimes v)f^{\eps}\rangle \right)\cdot \nabla\varphi(x)\, {\rm d}x
& \displaystyle  + \frac{1}{\eps} \int_\Omega \langle v f^{\eps}\rangle \cdot \nabla\varphi(x)\, {\rm d}x
\\[0.4 cm]
& \displaystyle 
+ \eps \int_\Omega \frac{1}{\sigma^\eps(x)} \langle v f^\eps \rangle (x) \cdot \nabla\varphi(x)\, {\rm d}x
= 0.
\end{array}
\end{equation}
Using \eqref{eq:moment:step3} in \eqref{eq:moment:step4} yields the following expression in which we need to pass to the limit.
\begin{equation}\label{eq:moment:step5}
\begin{array}{ll}
\displaystyle 
\int_\Omega \left( \frac{1}{\sigma^\eps(x)}\nabla \cdot \langle (v\otimes v)f^{\eps}\rangle \right)\cdot \nabla\varphi(x)\, {\rm d}x
= \int_\Omega g(x)\varphi(x){\rm d}x
\\[0.4 cm]
\displaystyle 
- \int_\Omega \langle f^\eps \rangle(x) \varphi(x)\, {\rm d}x
- \eps \int_\Omega \frac{1}{\sigma^\eps(x)} \langle v f^\eps \rangle (x) \cdot \nabla\varphi(x)\, {\rm d}x.
\end{array}
\end{equation}
The \emph{moments method} culminates in passing to the limit as $\eps\to0$ in \eqref{eq:moment:step5} using some compactness properties of the family $f^\eps(x,v)$. In this article, this final step of the moments method is achieved in Section \ref{sec:1d} for the one-dimensional case and in Section \ref{sec:arbitrary} for any arbitrary dimension. Observe that the expression \eqref{eq:moment:step5} can be treated as a weak formulation for the following second-order differential equation in the $x$ variable:
\begin{align}\label{eq:moment:div-problem}
- \nabla_x \cdot \left( \frac{1}{\sigma^\eps(x)}\nabla \cdot \langle (v\otimes v)f^{\eps}\rangle  \right)
= \mathcal{G}^\eps(x),
\end{align}
where $\mathcal{G}^\eps:\Omega\to\R$ is defined as
\begin{align}\label{eq:moment:rhs}
\mathcal{G}^\eps (x) := g(x) - \langle f^\eps \rangle(x) + \eps \nabla_x\cdot \left( \frac{1}{\sigma^\eps(x)} \langle v f^\eps \rangle (x)\right).
\end{align}
Next, we give a result showing that the family $\mathcal{G}^\eps(x)$ is uniformly bounded in $L^2(\Omega)$ provided the exponent $\beta$ takes values in certain interval of $[0,\infty)$.
\begin{Lem}\label{lem:mathcal-G-uniform-L2}
Let $f^\eps(x,v)$ be the solution to \eqref{eq:lbe:model:eps}-\eqref{eq:lbe:absorption:eps} and suppose the exponent $\beta\le2$. Let $\mathcal{G}^\eps(x)$ be the family of scalar functions defined by \eqref{eq:moment:rhs}. There exists a constant $C$, independent of $\eps$, such that
\begin{align*}
\|\mathcal{G}^\eps\|_{L^2(\Omega)} \le C.
\end{align*}
\end{Lem}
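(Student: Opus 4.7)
The plan is to bound $\|\mathcal{G}^\eps\|_{L^2(\Omega)}$ by showing that each of the three terms in the definition \eqref{eq:moment:rhs} is uniformly bounded in $L^2(\Omega)$. The first term $g(x)$ is bounded by assumption, and the second term $\langle f^\eps\rangle$ is bounded in $L^2(\Omega)$ by \eqref{eq:lbe:uniform:lfr-L2-x} of Lemma \ref{lem:uniform-apriori}. All the work lies in the third term, and this is where the assumption $\beta\le 2$ will enter.

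For the third term, I would expand the divergence using the product rule:
\[
\eps\, \nabla_x\cdot\!\left(\frac{1}{\sigma^\eps(x)}\langle v f^\eps\rangle\right)
= \frac{\eps}{\sigma^\eps(x)}\,\nabla_x\cdot\langle v f^\eps\rangle
- \frac{\eps\,\nabla\sigma^\eps(x)}{\sigma^\eps(x)^2}\cdot\langle v f^\eps\rangle.
\]
To handle the first piece, I would use \eqref{eq:moment:step1}, which reads $\langle f^\eps\rangle+\eps^{-1}\nabla_x\cdot\langle v f^\eps\rangle=g$, i.e.\ $\nabla_x\cdot\langle v f^\eps\rangle=\eps(g-\langle f^\eps\rangle)$. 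Combined with the lower bound $\sigma^\eps\ge\mathfrak{a}$ from \eqref{eq:lbe:hypo-cross-section} this gives
\[
\left\|\frac{\eps}{\sigma^\eps}\,\nabla_x\cdot\langle v f^\eps\rangle\right\|_{L^2(\Omega)}
= \left\|\frac{\eps^2}{\sigma^\eps}\bigl(g-\langle f^\eps\rangle\bigr)\right\|_{L^2(\Omega)}
\le \frac{\eps^2}{\mathfrak{a}}\bigl(\|g\|_{L^2(\Omega)}+\|\langle f^\eps\rangle\|_{L^2(\Omega)}\bigr),
\]
which is $\mathcal{O}(\eps^2)$ by Lemma \ref{lem:uniform-apriori}.

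For the second piece, I would use the uniform $L^\infty$-bound $\|\nabla\sigma^\eps\|_{L^\infty}\le\mathfrak{c}\,\eps^{-\beta}$ from \eqref{eq:lbe:hypo-cross-section} together with $\sigma^\eps\ge\mathfrak{a}$, so that
\[
\left\|\frac{\eps\,\nabla\sigma^\eps}{(\sigma^\eps)^2}\cdot\langle v f^\eps\rangle\right\|_{L^2(\Omega)}
\le \frac{\mathfrak{c}\,\eps^{1-\beta}}{\mathfrak{a}^2}\,\|\langle v f^\eps\rangle\|_{[L^2(\Omega)]^d}.
\]
Here is where the crucial estimate of Lemma \ref{lem:crucial} enters: $\|\langle vf^\eps\rangle\|_{[L^2(\Omega)]^d}\le\eps\sqrt{\langle|v|^2\rangle}\,\|g\|_{L^2(\Omega)}$. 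Plugging this in yields a bound of order $\eps^{2-\beta}$, which stays $\mathcal{O}(1)$ precisely when $\beta\le 2$. The main obstacle is thus a bookkeeping one: matching the loss $\eps^{-\beta}$ coming from the gradient of the rapidly oscillating scattering coefficient against the two gains of $\eps$ coming respectively from the prefactor and from the crucial estimate on $\langle v f^\eps\rangle$. Collecting the three bounds yields $\|\mathcal{G}^\eps\|_{L^2(\Omega)}\le C$ with $C$ depending only on $\mathfrak{a},\mathfrak{b},\mathfrak{c},\langle|v|^2\rangle$ and $\|g\|_{L^2(\Omega)}$.
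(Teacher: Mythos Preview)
Your proof is correct and follows essentially the same route as the paper: expand the divergence in the third term of \eqref{eq:moment:rhs} by the product rule, replace $\nabla_x\cdot\langle v f^\eps\rangle$ via the continuity equation \eqref{eq:moment:step1}, and control the remaining piece $\eps\,(\nabla\sigma^\eps/(\sigma^\eps)^2)\cdot\langle v f^\eps\rangle$ using the growth bound \eqref{eq:lbe:hypo-cross-section} together with Lemma~\ref{lem:crucial}, which is exactly where $\beta\le 2$ is needed. If anything, your write-up is slightly more explicit than the paper's in singling out Lemma~\ref{lem:crucial} as the source of the extra factor of $\eps$.
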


\begin{proof}
By chain rule, we have
\begin{align}\label{eq:mathcal-G-eps}
\mathcal{G}^\eps(x) = g(x) - \langle f^\eps \rangle(x) + \frac{\eps}{\sigma^\eps(x)} \nabla_x\cdot \langle v f^\eps \rangle (x) - \eps \frac{\nabla_x \sigma^\eps(x)}{[\sigma^\eps(x)]^2}\cdot \langle v f^\eps \rangle(x).
\end{align}
Substitute for $\nabla_x\cdot \langle v f^\eps \rangle$ using the continuity equation \eqref{eq:moment:step1} in the above expression yielding:
\begin{align*}
\mathcal{G}^\eps(x) & = g(x) - \langle f^\eps \rangle(x) + \frac{\eps^2}{\sigma^\eps(x)} g(x) - \frac{\eps^2}{\sigma^\eps(x)}\langle f^\eps \rangle (x) - \eps \frac{\nabla_x \sigma^\eps(x)}{[\sigma^\eps(x)]^2}\cdot \langle v f^\eps \rangle(x)\\
& = \left( 1 + \frac{\eps^2}{\sigma^\eps(x)} \right) \Big( g(x) - \langle f^\eps \rangle (x) \Big)
- \eps \frac{\nabla_x \sigma^\eps(x)}{[\sigma^\eps(x)]^2}\cdot \langle v f^\eps \rangle(x).
\end{align*}
Computing the $L^2$-norm for $\mathcal{G}^\eps$ we have:
\begin{align*}
\|\mathcal{G}^\eps\|_{L^2(\Omega)} 
& \le 
\left\|
\left( 1 + \frac{\eps^2}{\sigma^\eps(x)} \right)
\right\|_{L^\infty}
\left(
\|g\|_{L^2(\Omega)} + \|\langle f^\eps\rangle\|_{L^2(\Omega)}
\right)
+ \eps
\left\|
\frac{\nabla\sigma^\eps}{[\sigma^\eps]^2}
\right\|_{L^\infty}
\|\langle v f^\eps\rangle\|_{[L^2(\Omega)]^d}\\
& \le 
C \left( 1 + \frac{1}{\mathfrak{a}} \right)
\left(
\|g\|_{L^2(\Omega)} + \|\langle f^\eps\rangle\|_{L^2(\Omega)}
\right)
+ C \left( \frac{\mathfrak{c}}{\mathfrak{a}^2}\right)
\eps^{1-\beta}
\|\langle v f^\eps\rangle\|_{[L^2(\Omega)]^d}
\end{align*}
where we have used the growth assumption \eqref{eq:lbe:hypo-cross-section} on the heterogeneous coefficient $\sigma^\eps$. The assumption on the exponent $\beta$ (i.e. $\beta\le2$) and the uniform estimates \eqref{eq:lbe:uniform:lfr-L2-x} on $\langle f^\eps \rangle$ help us arrive at the uniform $L^2$-bound for $\mathcal{G}^\eps(x)$.
\end{proof}

%%%%%%%%%%%%%%%%%%%%%%%%%%%%%%%
\section{One dimensional setting}\label{sec:1d}
%%%%%%%%%%%%%%%%%%%%%%%%%%%%%%%

In this section, we treat a special setting: both the spatial and velocity domains are one-dimensional, i.e. $x\in (-\ell, +\ell)$ and $v\in\mathcal{V}$ where $\mathcal{V}$ is either $\R$ or $(-1,+1)$. We consider the transport equation for the one particle distribution function $f^\eps(x,v)$:

\begin{subequations}
\begin{align}
f^\eps + \frac{1}{\eps}v \frac{{\rm d}f^\eps}{{\rm d}x} + \frac{\sigma^\eps(x)}{\eps^2} \Big( f^\eps - \langle f^\eps \rangle \Big) = g(x)
\quad
\mbox{ for }(x,v)\in (-\ell, +\ell) \times \mathcal{V},\label{eq:1d:model}\\
f^\eps(x,v) = 0 \quad \mbox{ for }x = \pm\ell \mbox{ and }v\in\mathcal{V},\label{eq:1d:absorption}
\end{align}
\end{subequations}
where the heterogeneous coefficient $\sigma^\eps(x)$ are assumed to satisfy the same regularity assumptions as before, i.e. \eqref{eq:lbe:hypo-cross-section}.

\begin{Thm}\label{thm:1d}
The solution family $f^\eps(x,v)$ to the one-dimensional stationary linear Boltzmann equation \eqref{eq:1d:model}-\eqref{eq:1d:absorption} exhibits the following compactness property:
\begin{align*}
f^\eps(x,v) \rightharpoonup \rho(x)
\qquad
\mbox{ weakly in }L^2((-\ell, +\ell)\times\mathcal{V};{\rm d}x{\rm d}\mu),
\end{align*}
where $\rho(x)$ is the unique solution to the stationary diffusion equation:
\begin{subequations}
\begin{align}
\rho(x) - \frac{{\rm d}}{{\rm d}x} \left(\frac{\mathfrak{D}}{\sigma^*}\frac{{\rm d}\rho}{{\rm d}x}\right) = g(x)
\qquad
\mbox{ for }x\in (-\ell, +\ell),\label{eq:1d:diff-limit-equation}
\\
\rho(x)=0
\qquad \qquad
\mbox{ for }x=\pm\ell,\label{eq:1d:diff-limit-BC}
\end{align}
\end{subequations}
where $\mathfrak{D}$ is a constant equal to $\langle v^2 \rangle$ and $\sigma^*$ is the $L^\infty$ weak-$^*$ limit of the sequence $\sigma^\eps$.
\end{Thm}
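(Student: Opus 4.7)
The plan is to pass to the limit $\eps\to 0$ in the moments-method weak formulation \eqref{eq:moment:step5}, exploiting the coincidence of the divergence and gradient operators in one space dimension. After extracting a subsequence (not relabeled), the uniform bounds of Section \ref{sec:uniform} yield $f^\eps\rightharpoonup f$ in $L^2((-\ell,+\ell)\times\mathcal{V};{\rm d}x{\rm d}\mu)$, $\langle f^\eps\rangle\rightharpoonup\rho$ in $L^2(-\ell,+\ell)$, and $\sigma^\eps\rightharpoonup^*\sigma^*$ in $L^\infty(-\ell,+\ell)$; estimate \eqref{eq:lbe:uniform:f-lfr-L2-x-v} forces $f(x,v)=\rho(x)$.

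The decisive one-dimensional input is Lemma \ref{lem:H-div}, which shows that $u^\eps:=\langle v^2 f^\eps\rangle$ is uniformly bounded in $H^1(-\ell,+\ell)$, hence relatively compact in $L^2$ by Rellich. Writing
\[
u^\eps = \mathfrak{D}\,\langle f^\eps\rangle + \langle v^2(f^\eps-\langle f^\eps\rangle)\rangle,
\]
the last term is $\mathcal{O}(\eps)$ in $L^2$ by \eqref{eq:lbe:uniform:f-lfr-L2-x-v}, so matching the strong $L^2$-limit of $u^\eps$ with the weak limit $\mathfrak{D}\rho$ of $\mathfrak{D}\langle f^\eps\rangle$ upgrades these convergences to $u^\eps\to\mathfrak{D}\rho$ strongly in $L^2$ and $\langle f^\eps\rangle\to\rho$ strongly in $L^2$; in particular $\rho\in H^1(-\ell,+\ell)$ and $\partial_x u^\eps\rightharpoonup\mathfrak{D}\,\partial_x\rho$ weakly in $L^2$.

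The same compactness trick then applies to the flux $\xi^\eps:=\frac{1}{\sigma^\eps}\partial_x u^\eps$. The moments identity \eqref{eq:moment:div-problem} reads $-\partial_x\xi^\eps=\mathcal{G}^\eps$, and Lemma \ref{lem:mathcal-G-uniform-L2}---exactly where the assumption $\beta\le 2$ enters---bounds $\mathcal{G}^\eps$ uniformly in $L^2$, while the lower bound $\sigma^\eps\ge\mathfrak{a}$ bounds $\xi^\eps$ in $L^2$. Hence $\xi^\eps$ is uniformly bounded in $H^1$, strongly compact in $L^2$, with some strong limit $\xi$. Since $\partial_x u^\eps=\sigma^\eps\,\xi^\eps$ is a product of a weak-$*$ $L^\infty$-convergent sequence with a strongly $L^2$-convergent sequence, it converges weakly in $L^2$ to $\sigma^*\,\xi$; comparison with $\partial_x u^\eps\rightharpoonup\mathfrak{D}\,\partial_x\rho$ forces $\xi=\frac{\mathfrak{D}}{\sigma^*}\,\partial_x\rho$. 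Passing to the limit in \eqref{eq:moment:step5}, whose last right-hand-side term is $\mathcal{O}(\eps^2)$ by Lemma \ref{lem:crucial} and therefore vanishes, yields for every $\varphi\in H^1_0(-\ell,+\ell)$ the variational identity
\[
\int_{-\ell}^{+\ell}\frac{\mathfrak{D}}{\sigma^*}\,\partial_x\rho\,\partial_x\varphi\,{\rm d}x = \int_{-\ell}^{+\ell} g\,\varphi\,{\rm d}x - \int_{-\ell}^{+\ell}\rho\,\varphi\,{\rm d}x,
\]
which is \eqref{eq:1d:diff-limit-equation} in the sense of distributions.

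The main remaining obstacle is the Dirichlet condition \eqref{eq:1d:diff-limit-BC}. I would sharpen the energy argument in the proof of Lemma \ref{lem:lbe:entropy-ineq} by retaining the non-negative outflow contribution $\tfrac{1}{2\eps}\int_{\Sigma_+}(v\cdot n)|\gamma_+ f^\eps|^2\,{\rm d}\mu\,{\rm d}\Gamma$ that is otherwise dropped, yielding the trace estimate $\int_{\Sigma_+}(v\cdot n)|\gamma_+ f^\eps|^2\,{\rm d}\mu\,{\rm d}\Gamma=\mathcal{O}(\eps)$. At the endpoints $x=\pm\ell$ in one dimension, the incoming part of $u^\eps(\pm\ell)$ vanishes identically by \eqref{eq:1d:absorption}, while the outgoing part is controlled by Cauchy--Schwarz via the above estimate (using the boundedness of $\mathcal{V}$ to absorb one power of $|v|$, or equivalently a finite moment of $\mu$), so $u^\eps(\pm\ell)\to 0$; the weak $H^1$-convergence then transfers the vanishing trace to $\rho$, giving $\rho(\pm\ell)=0$. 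With $\rho\in H^1_0(-\ell,+\ell)$ secured, coercivity of the bilinear form $(\rho,\varphi)\mapsto\int\frac{\mathfrak{D}}{\sigma^*}\partial_x\rho\,\partial_x\varphi+\int\rho\,\varphi$ and Lax--Milgram identify $\rho$ as the unique solution of \eqref{eq:1d:diff-limit-equation}--\eqref{eq:1d:diff-limit-BC}, and this uniqueness lifts the subsequential convergence to the entire family.
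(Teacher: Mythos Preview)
Your argument is correct and follows essentially the same route as the paper: the key steps---the $H^1(-\ell,+\ell)$ bound on $\langle v^2 f^\eps\rangle$ from Lemma~\ref{lem:H-div}, the $H^1$ bound on the flux $\xi^\eps=\frac{1}{\sigma^\eps}\partial_x\langle v^2 f^\eps\rangle$ via Lemma~\ref{lem:mathcal-G-uniform-L2}, and the identification of the limit of $\sigma^\eps\xi^\eps$ as the product of the weak-$*$ limit $\sigma^*$ with the strong limit $\xi$---are exactly those of the paper (where your $\xi^\eps$ is called $\zeta^\eps$). You are in fact more explicit than the paper on two points it glosses over: the decomposition $u^\eps=\mathfrak{D}\langle f^\eps\rangle+\langle v^2(f^\eps-\langle f^\eps\rangle)\rangle$ that identifies the strong limit of $u^\eps$ as $\mathfrak{D}\rho$, and the recovery of the Dirichlet boundary condition for $\rho$ via a sharpened trace estimate, which the paper does not address.
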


\begin{proof}
The a priori estimates of Lemma \ref{lem:H-div} in the one-dimensional setting imply that the sequence $\langle v^2 f^\eps \rangle$ is uniformly bounded in $H^1(-\ell, +\ell)$. Hence we can extract a sub-sequence such that
\begin{align}
\langle v^2 f^\eps \rangle \to \mathfrak{D} \rho
\qquad
\mbox{ strongly in }L^2(-\ell, +\ell),\label{eq:1d:strong-lim-l-v2f-eps-r}
\\
\frac{{\rm d}}{{\rm d}x}\langle v^2 f^\eps \rangle \rightharpoonup \mathfrak{D}\frac{{\rm d}\rho}{{\rm d}x}
\qquad
\mbox{ weakly in }L^2(-\ell, +\ell),\label{eq:1d:strong-lim-derive-l-v2f-eps-r}
\end{align}
where $\mathfrak{D}$ is a constant given by $\langle v^2 \rangle$ and $\rho(x)$ is the $L^2$ weak limit of the local densities, i.e.
\[
\int_{\mathcal{V}} f^\eps(x,v)\, {\rm d}\mu(v) \rightharpoonup \rho(x)
\qquad \mbox{ weakly in }L^2(-\ell, +\ell).
\]
The second order differential equation (i.e. the one similar to \eqref{eq:moment:div-problem}) that we get via the moments method (see Section \ref{sec:moments}) in this one-dimensional setting is the following:
\begin{align}\label{eq:1d:div-problem}
-\frac{{\rm d}}{{\rm d}x}
\left(
\frac{1}{\sigma^\eps(x)} \frac{{\rm d}\langle v^2 f^\eps\rangle}{{\rm d}x}
\right)
= 
\mathcal{G}^\eps(x),
\end{align}
where $\mathcal{G}^\eps(x)$ is given by
\begin{align*}
\mathcal{G}^\eps (x) := g(x) - \langle f^\eps \rangle(x) + \eps \frac{{\rm d}}{{\rm d}x}\left( \frac{1}{\sigma^\eps(x)} \langle v f^\eps \rangle (x)\right).
\end{align*}
Define
\begin{align}\label{eq:1d:def-zeta-eps}
\zeta^\eps(x)
:=
\frac{1}{\sigma^\eps(x)}\frac{{\rm d}}{{\rm d}x}\langle v^2 f^\eps \rangle.
\end{align}
We have the following uniform $L^2$-bound:
\begin{align}\label{eq:1d:uniform-L2-zeta-eps}
\|\zeta^\eps\|_{L^2(-\ell, +\ell)}
\le 
\frac{1}{\mathfrak{a}}
\left\|
\frac{{\rm d}}{{\rm d}x}\langle v^2 f^\eps \rangle
\right\|_{L^2(-\ell, +\ell)}
\le 
\frac{\mathfrak{b}}{\mathfrak{a}}
\|g\|_{L^2(-\ell, +\ell)}
\end{align}
where we have used the assumption \eqref{eq:lbe:hypo-cross-section} that $\sigma^\eps$ is bounded from below and the uniform a priori bound \eqref{eq:nabla-v-tensor-v-f-L2} from Lemma \ref{lem:H-div}. From \eqref{eq:1d:div-problem}, it follows that $\zeta^\eps(x)$ solves
\begin{align}\label{eq:1d:equation-for-zeta-eps}
- \frac{{\rm d}}{{\rm d}x} \zeta^\eps(x)
= 
\mathcal{G}^\eps(x).
\end{align}
The uniform $L^2$-estimate on $\mathcal{G}^\eps$ (Lemma \ref{lem:mathcal-G-uniform-L2}) yields a uniform bound on the derivative of $\zeta^\eps$:
\begin{align}\label{eq:1d:uniform-L2-derive-zeta-eps}
\left\|
\frac{{\rm d}\zeta^\eps}{{\rm d}x}
\right\|_{L^2(-\ell, +\ell)}
= 
\|\mathcal{G}^\eps\|_{L^2(-\ell, +\ell)}
\le C.
\end{align}
The estimates \eqref{eq:1d:uniform-L2-zeta-eps} and \eqref{eq:1d:uniform-L2-derive-zeta-eps} together imply that the sequence $\zeta^\eps$ is uniformly bounded in $H^1(\Omega)$. The compact embedding $H^1(-\ell, +\ell)\hookrightarrow L^2(-\ell, +\ell)$ implies that we can extract a sub-sequence and there exists a limit $\zeta^0$ such that
\begin{align}\label{eq:1d:strong-lim-zeta-0}
\zeta^\eps \to \zeta^0
\qquad
\mbox{ strongly in }L^2(-\ell, +\ell).
\end{align}
By the definition \eqref{eq:1d:def-zeta-eps} of $\zeta^\eps$, we have:
\begin{align*}
\sigma^\eps(x) \zeta^\eps(x) = \frac{{\rm d}}{{\rm d}x}\langle v^2 f^\eps \rangle
\end{align*}
Because of the strong convergence in \eqref{eq:1d:strong-lim-zeta-0}, we have the following:
\begin{align*}
\sigma^\eps \zeta^\eps \rightharpoonup \sigma^* \zeta^0
\qquad\mbox{ weakly in }L^2(-\ell, +\ell),
\end{align*}
where $\sigma^*$ is the $L^\infty$ weak-${}^*$ limit of the sequence $\sigma^\eps$. Identifying the above weak limit with the weak limit \eqref{eq:1d:strong-lim-derive-l-v2f-eps-r}, we get:
\begin{align*}
\zeta^0(x) = \frac{\mathfrak{D}}{\sigma^*}\frac{{\rm d}\rho}{{\rm d}x}
\end{align*}
Upon passing to the limit (as $\eps\to0$) in \eqref{eq:1d:equation-for-zeta-eps}, we have the following equation:
\begin{align*}
-\frac{{\rm d}\zeta^0}{{\rm d}x} = g(x) - \rho(x).
\end{align*}
Substituting for $\zeta^0$ in the above equation, we get:
\begin{align*}
\rho(x) - \frac{{\rm d}}{{\rm d}x}\left( \frac{\mathfrak{D}}{\sigma^*} \frac{{\rm d}\rho}{{\rm d}x}\right) = g(x)
\qquad
\mbox{ for }x\in (-\ell, +\ell).
\end{align*}
The unique solvability of the limit equation \eqref{eq:1d:diff-limit-equation}-\eqref{eq:1d:diff-limit-BC} follows from the Lax-Milgram theorem.
\end{proof}

%%%%%%%%%%%%%%%%%%%
\section{Arbitrary dimensions}\label{sec:arbitrary}
%%%%%%%%%%%%%%%%%%%

In the previous section, using the moments method, we managed to prove the $\eps\to0$ limit in the one-dimensional setting under the assumption that the exponent $\beta\le2$. In this section, we prove that the $\eps\to0$ limit for the linear Boltzmann equation \eqref{eq:lbe:model:eps}-\eqref{eq:lbe:absorption:eps} in arbitrary dimensions can be obtained under some smallness criterion on the $H^{-\frac{1}{2}}$ norm of a quotient involving the heterogeneous coefficient $\sigma^\eps(x)$. 

\begin{Thm}\label{thm:H-1/2}
Suppose there exists $\bar{\sigma}(x)\in L^\infty(\Omega)$, bounded away from zero, such that
\begin{align}\label{H-1/2-condition}
\left\| \frac{\bar{\sigma}(x) - \sigma^\eps(x)}{\bar{\sigma}(x)} \right\|_{H^{-1/2}(\Omega)} = \mathcal{O}(\eps^{1+}).
\end{align}
Then the family of local densities $\langle f^\eps(x,\cdot)\rangle$ exhibit the following compactness property:
\[
\int_{\mathcal{V}} f^\eps(x,v)\, {\rm d}\mu(v) \rightharpoonup \rho(x) \qquad \mbox{ weakly in }L^2(\Omega)
\]
where the limit local density $\rho(x)$ satisfies the following diffusion equation:
\begin{align}
\rho(x) - \nabla \cdot \left( \frac{\mathfrak{D}}{\bar{\sigma}(x)} \nabla \rho(x) \right) = g(x) \quad \mbox{ for }x\in \Omega,\label{limit-pb}
\\
\rho(x) = 0 \quad \mbox{ for }x\in \partial\Omega.\label{limit-BC}
\end{align}
with $\mathfrak{D}$ a constant matrix equal to $\langle v\otimes v\rangle$.
\end{Thm}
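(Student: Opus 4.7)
The strategy is to pass to the limit $\eps\to 0$ in the moments-method weak formulation \eqref{eq:moment:step5}, tested against $\varphi\in C^\infty_c(\Omega)$ (the extension to $\varphi\in H^1_0(\Omega)$ following by density). The crux is to identify the weak limit of the ill-behaved product $(1/\sigma^\eps)\nabla\cdot A^\eps\cdot\nabla\varphi$, where $A^\eps:=\langle(v\otimes v)f^\eps\rangle$; the hypothesis \eqref{H-1/2-condition} is used precisely at that point.

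First I would set up compactness. Rewriting \eqref{eq:lbe:model:eps} as $v\cdot\nabla_x f^\eps=\eps g-\eps f^\eps-(\sigma^\eps/\eps)(f^\eps-\langle f^\eps\rangle)$ and invoking \eqref{eq:lbe:uniform:f-L2-x-v}--\eqref{eq:lbe:uniform:f-lfr-L2-x-v}, the right-hand side is uniformly bounded in $L^2(\Omega\times\mathcal{V})$. The classical Golse--Lions--Perthame--Sentis velocity averaging lemma then yields uniform $H^{1/2}(\Omega)$ bounds on the velocity moments $\langle f^\eps\rangle$, $\langle vf^\eps\rangle$ and $\langle(v\otimes v)f^\eps\rangle$. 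Combining this with Rellich--Kondrachov and \eqref{eq:lbe:uniform:f-lfr-L2-x-v} gives $\langle f^\eps\rangle\to\rho$ strongly in $L^2(\Omega)$ along a subsequence, and using the identity $A^\eps=\mathfrak{D}\langle f^\eps\rangle+\langle(v\otimes v)(f^\eps-\langle f^\eps\rangle)\rangle$ whose remainder is $\mathcal{O}(\eps)$ in $L^2$, also $A^\eps\to\mathfrak{D}\rho$ strongly in $L^2$. Coupled with the $L^2$ bound on $\nabla\cdot A^\eps$ from Lemma \ref{lem:H-div}, this forces $\rho\in H^1(\Omega)$ and $\nabla\cdot A^\eps\rightharpoonup\mathfrak{D}\nabla\rho$ weakly in $L^2$. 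The homogeneous trace $\rho|_{\partial\Omega}=0$ comes from the incoming absorption \eqref{eq:lbe:absorption:eps} together with the boundary behaviour of the $H^{1/2}$ moments.

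Passing to the limit on the right-hand side of \eqref{eq:moment:step5} is easy: $\int_\Omega\langle f^\eps\rangle\varphi\,dx\to\int_\Omega\rho\varphi\,dx$ by strong convergence, while the remaining term is $\mathcal{O}(\eps^2)$ by Lemma \ref{lem:crucial}. For the left-hand side I would use the algebraic splitting
\[
\frac{1}{\sigma^\eps}=\frac{1}{\bar\sigma}+\frac{\bar\sigma-\sigma^\eps}{\bar\sigma\,\sigma^\eps}.
\]
The principal part $\int_\Omega(1/\bar\sigma)(\nabla\cdot A^\eps)\cdot\nabla\varphi\,dx$ passes to the limit by weak--strong pairing, $\nabla\cdot A^\eps\rightharpoonup\mathfrak{D}\nabla\rho$ weakly in $L^2$ against the fixed $L^2$ function $\nabla\varphi/\bar\sigma$, yielding the desired elliptic term $\int_\Omega(\mathfrak{D}/\bar\sigma)\nabla\rho\cdot\nabla\varphi\,dx$.

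The main obstacle is to show that the remainder
\[
R^\eps:=\int_\Omega\frac{\bar\sigma-\sigma^\eps}{\bar\sigma}\cdot\frac{(\nabla\cdot A^\eps)\cdot\nabla\varphi}{\sigma^\eps}\,dx
\]
vanishes as $\eps\to 0$. A naive attempt to estimate $(\nabla\cdot A^\eps)/\sigma^\eps$ in $H^{1/2}$ would demand regularity of $A^\eps$ that is not available; instead I would use the pointwise consequence of \eqref{eq:moment:step2},
\[
\frac{1}{\sigma^\eps}\nabla\cdot A^\eps=-\frac{\langle vf^\eps\rangle}{\eps}-\frac{\eps}{\sigma^\eps}\langle vf^\eps\rangle,
\]
to rewrite
\[
R^\eps=-\int_\Omega\frac{\bar\sigma-\sigma^\eps}{\bar\sigma}\cdot\frac{\langle vf^\eps\rangle\cdot\nabla\varphi}{\eps}\,dx+\mathcal{O}(\eps^2),
\]
where the $\mathcal{O}(\eps^2)$ error uses Lemma \ref{lem:crucial} and the $L^\infty$ bound on $(\bar\sigma-\sigma^\eps)/\bar\sigma$. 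The $H^{-1/2}$--$H^{1/2}$ duality then furnishes
\[
|R^\eps|\lesssim\Big\|\tfrac{\bar\sigma-\sigma^\eps}{\bar\sigma}\Big\|_{H^{-1/2}(\Omega)}\,\Big\|\tfrac{\langle vf^\eps\rangle\cdot\nabla\varphi}{\eps}\Big\|_{H^{1/2}(\Omega)}+\mathcal{O}(\eps^2).
\]
By assumption \eqref{H-1/2-condition} the first factor is $\mathcal{O}(\eps^{1+})$, while velocity averaging applied to $\langle vf^\eps\rangle$ gives an $\mathcal{O}(1)$ bound in $H^{1/2}$ and hence the second factor is $\mathcal{O}(\eps^{-1})$ (with $\nabla\varphi$ fixed and smooth). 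Their product is $\mathcal{O}(\eps^{0+})$, and $R^\eps\to 0$. Collecting the pieces gives
\[
\int_\Omega\rho\varphi\,dx+\int_\Omega\frac{\mathfrak{D}}{\bar\sigma}\nabla\rho\cdot\nabla\varphi\,dx=\int_\Omega g\varphi\,dx\qquad\forall\varphi\in H^1_0(\Omega),
\]
which is the weak form of \eqref{limit-pb}--\eqref{limit-BC}; Lax--Milgram then upgrades the subsequential convergence to convergence of the entire family.
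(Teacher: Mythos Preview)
Your proposal is correct and follows essentially the same route as the paper: the crux in both is to use the moment identity \eqref{eq:moment:step2} to replace $(1/\sigma^\eps)\nabla\cdot\langle(v\otimes v)f^\eps\rangle$ by $-\eps^{-1}\langle vf^\eps\rangle$ (up to an $\mathcal{O}(\eps)$ remainder), and then control the oscillatory term by pairing the $H^{-1/2}$ smallness of $(\bar\sigma-\sigma^\eps)/\bar\sigma$ against the $H^{1/2}$ bound on $\langle vf^\eps\rangle$ coming from velocity averaging. The only cosmetic difference is that the paper rewrites the kinetic equation with $\bar\sigma$ in place of $\sigma^\eps$ \emph{before} taking moments (so the splitting is built in), whereas you take moments first and then split $1/\sigma^\eps=1/\bar\sigma+(\bar\sigma-\sigma^\eps)/(\bar\sigma\sigma^\eps)$; after invoking \eqref{eq:moment:step2} these are algebraically the same computation.
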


\begin{proof}
Let us rewrite our steady state model problem \eqref{eq:lbe:model:eps} as follows:
\begin{align*}
\eps\, f^\eps + v\cdot \nabla f^\eps + \frac{\bar{\sigma}(x)}{\eps} \left( f^\eps - \langle f^\eps \rangle\right) = \Big(\bar{\sigma}(x) - \sigma^\eps(x)\Big)\left( \frac{f^\eps - \langle f^\eps \rangle}{\eps}\right) + \eps\, g(x).
\end{align*}
Applying the moments approach outlined in Section \ref{sec:moments} to the above problem, we arrive at the following weak formulation with a smooth test function $\Psi(x)$ vanishing on $\partial \Omega$:
\begin{equation}\label{eq:general:D:wf}
\begin{aligned}
\eps\int_\Omega\frac{1}{\bar{\sigma}(x)} \langle v f^\eps\rangle\cdot \nabla\Psi(x)\, {\rm d}x
+ \int_\Omega \left( \frac{1}{\bar{\sigma}(x)}\nabla \cdot \langle (v\otimes v)f^\eps\rangle \right)\cdot \nabla\Psi(x)\, {\rm d}x
+ \int_\Omega \langle f^\eps\rangle\Psi(x)\, {\rm d}x\\
= \int_\Omega \left(\frac{\bar{\sigma}(x) - \sigma^\eps(x)}{\bar{\sigma}(x)} \right)\frac{\langle vf^\eps\rangle}{\eps}\, {\rm d}x
+ \int_\Omega g(x)\Psi(x){\rm d}x.
\end{aligned}
\end{equation}
We have for the first term on the left hand side of the above equality:
\begin{align*}
\left| \eps\int_\Omega\frac{1}{\bar{\sigma}(x)} \langle v f^\eps\rangle\cdot \nabla\Psi(x)\, {\rm d}x \right|
\le \eps \left\|\bar{\sigma}^{-1}\right\|_{L^\infty(\Omega)} \|\langle vf^\eps\rangle\|_{[L^2(\Omega)]^d} \|\nabla \Psi\|_{[L^2(\Omega)]^d}
\le C \eps^2,
\end{align*}
thanks to Lemma \ref{lem:crucial}. Next, for the first term on the right hand side of \eqref{eq:general:D:wf} we have
\begin{align*}
\int_\Omega \left(\frac{\bar{\sigma}(x) - \sigma^\eps(x)}{\bar{\sigma}(x)} \right)\frac{\langle vf^\eps\rangle}{\eps}\, {\rm d}x 
= \frac{1}{\eps}\llangle[\bigg] \left(\frac{\bar{\sigma} - \sigma^\eps}{\bar{\sigma}} \right)\mbox{Id} , \langle vf^\eps\rangle \rrangle[\bigg]_{[H^{-1/2}(\Omega)]^d, [H^{1/2}(\Omega)]^d}
\end{align*}
which implies:
\begin{align*}
\left| \int_\Omega \left(\frac{\bar{\sigma}(x) - \sigma^\eps(x)}{\bar{\sigma}(x)} \right)\frac{\langle vf^\eps\rangle}{\eps}\, {\rm d}x  \right|
\le \frac{1}{\eps} \left\|\left(\frac{\bar{\sigma} - \sigma^\eps}{\bar{\sigma}} \right)\mbox{Id}  \right\|_{[H^{-1/2}(\Omega)]^d} \left\| \langle vf^\eps\rangle \right\|_{[H^{1/2}(\Omega)]^d}.
\end{align*}
The hypothesis \eqref{H-1/2-condition} and the velocity-averaging Lemma \cite{golse1985resultat, golse1988regularity} together imply that the above term is of $\mathcal{O}(\eps^+)$. With all the above observation, we are left to pass to the limit in the following expression:
\begin{align*}
\mathcal{O}(\eps^2)
+ \int_\Omega \left( \frac{1}{\bar{\sigma}(x)}\nabla \cdot \langle (v\otimes v)f^\eps\rangle \right)\cdot \nabla\Psi(x)\, {\rm d}x
+ \int_\Omega \langle f^\eps\rangle\Psi(x)\, {\rm d}x
= \mathcal{O}(\eps^+)
+ \int_\Omega g(x)\Psi(x){\rm d}x.
\end{align*}
Observe that
\begin{align*}
\lim_{\eps\to0}\int_\Omega \left( \frac{1}{\bar{\sigma}(x)}\nabla \cdot \langle (v\otimes v)f^\eps\rangle \right)\cdot \nabla\Psi(x)\, {\rm d}x
&= 
-\lim_{\eps\to0}
\sum_{i,j=1}^d \int_\Omega \langle v_iv_jf^\eps\rangle \partial_{x_j} \left( \frac{1}{\bar{\sigma}(x)} \partial_{x_i}\Psi(x)\right) \, {\rm d}x\\
&= - \sum_{i,j=1}^d  \int_\Omega \mathfrak{D}_{ij}\rho(x)\partial_{x_j} \left( \frac{1}{\bar{\sigma}(x)} \partial_{x_i}\Psi(x)\right) \, {\rm d}x,
\end{align*}
where $\mathfrak{D}_{ij} = \langle v_i v_j \rangle$. Thus, in the limit, we arrive at the following expression:
\begin{align*}
\int_\Omega \frac{\mathfrak{D}}{\bar{\sigma}(x)}\nabla\rho(x)\cdot \nabla \Psi(x)\, {\rm d}x
+ \int_\Omega \rho(x)\Psi(x)\, {\rm d}x
= \int_\Omega g(x)\Psi(x)\, {\rm d}x, 
\end{align*}
which is nothing but the weak formulation of the limit problem \eqref{limit-pb}-\eqref{limit-BC}. The unique solvability of \eqref{limit-pb}-\eqref{limit-BC} follows again by the application of Lax-Milgram theorem. Hence the entire family converges to the limit local density.
\end{proof}
Next, we make an interesting observation on the smallness of the $H^{-\frac12}$-condition \eqref{H-1/2-condition} with regard to a rapidly oscillating periodic function. Remark that the smallness assumption \eqref{H-1/2-condition} of the $H^{-\frac12}$-norm in the periodic setting corresponds to having the exponent $\beta>2$.
\begin{Lem}\label{lem:H-12:periodic}
Take $\sigma^\eps(x) = 2 + \sin\left(\frac{x}{\eps^\beta}\right)$ with $\beta>2$ and take $\bar{\sigma} = 2$. Then
\begin{align*}
\frac{1}{\eps}
\left\|
\frac{\bar{\sigma}
-
\sigma^\eps }{\bar{\sigma}}
\right\|_{H^{-\frac12}(0,2\pi)}
=
\mathcal{O}(\eps^+)
\end{align*}
\end{Lem}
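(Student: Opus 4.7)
The quotient simplifies explicitly to $(\bar\sigma - \sigma^\eps)/\bar\sigma = -\tfrac12 \sin(x/\eps^\beta)$, so everything reduces to estimating $\|\sin(x/\eps^\beta)\|_{H^{-1/2}(0,2\pi)}$. The plan is to obtain this $H^{-1/2}$ bound by interpolating between a trivial $L^2$ bound and a favorable $H^{-1}$ bound extracted by integration by parts, and then to compare the resulting power of $\eps$ with the prefactor $1/\eps$.

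First, I would record the two endpoint estimates. The $L^2$ bound is obvious: $\|\sin(x/\eps^\beta)\|_{L^2(0,2\pi)} \le \sqrt{2\pi}$, uniformly in $\eps$. For the $H^{-1}$ bound I would use the primitive: writing $\sin(x/\eps^\beta) = \frac{d}{dx}\bigl[-\eps^\beta \cos(x/\eps^\beta)\bigr]$, an integration by parts against any $\phi\in H^1_0(0,2\pi)$ kills the boundary terms and yields
\begin{align*}
\left| \int_0^{2\pi} \sin(x/\eps^\beta)\,\phi(x)\,{\rm d}x \right|
\;=\; \eps^\beta \left| \int_0^{2\pi} \cos(x/\eps^\beta)\,\phi'(x)\,{\rm d}x \right|
\;\le\; C\,\eps^\beta\, \|\phi\|_{H^1_0(0,2\pi)},
\end{align*}
hence $\|\sin(x/\eps^\beta)\|_{H^{-1}(0,2\pi)} \le C\,\eps^\beta$.

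Next, I would invoke the standard interpolation identity $[L^2,H^{-1}]_{1/2} = H^{-1/2}$, which gives
\begin{align*}
\|\sin(x/\eps^\beta)\|_{H^{-1/2}(0,2\pi)} \;\le\; \|\sin(x/\eps^\beta)\|_{L^2(0,2\pi)}^{1/2}\, \|\sin(x/\eps^\beta)\|_{H^{-1}(0,2\pi)}^{1/2} \;\le\; C\,\eps^{\beta/2}.
\end{align*}
Dividing by $\eps$ and by $\bar\sigma = 2$, I obtain $\eps^{-1}\,\|(\bar\sigma-\sigma^\eps)/\bar\sigma\|_{H^{-1/2}} \le C\,\eps^{\beta/2 - 1}$. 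Since $\beta > 2$ gives $\beta/2 - 1 > 0$, this is $\mathcal{O}(\eps^+)$, as desired.

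The only delicate point I anticipate is the precise choice of boundary convention for $H^{-1/2}(0,2\pi)$ used throughout the paper; depending on whether it denotes the dual of $H^{1/2}(0,2\pi)$ or of $H^{1/2}_{00}(0,2\pi)$, the duality in the integration-by-parts step must be matched so that the boundary terms in fact vanish on the relevant test space. Once the convention is fixed, the interpolation inequality $[L^2,H^{-1}]_{1/2}=H^{-1/2}$ is the standard one in that same convention, and no further obstacle remains: both the $L^2$ bound and the primitive-based $H^{-1}$ bound are entirely elementary.
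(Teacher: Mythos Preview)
Your proposal is correct and follows essentially the same route as the paper: compute the $H^{-1}$ norm via the primitive and integration by parts, note the trivial $L^2$ bound, interpolate to get $H^{-1/2}\sim\eps^{\beta/2}$, and conclude $\eps^{\beta/2-1}=\mathcal{O}(\eps^+)$ for $\beta>2$. The only cosmetic difference is that you test against $\phi\in H^1_0$ so the boundary terms vanish, whereas the paper tests against $\phi\in H^1$ and simply observes that the resulting boundary contribution is itself $\mathcal{O}(\eps^\beta)$; either way the $H^{-1}$ estimate is the same.
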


\begin{proof}
As the denominator in the quotient is a constant, we shall ignore it for the calculations to follow. We shall compute the $H^{-1}$-norm by testing against a function $\varphi\in H^1$:
\begin{align*}
\int_0^{2\pi}
\sin\left(\frac{x}{\eps^\beta}\right)
\varphi(x)
\, {\rm d} x
& =
-
\eps^\beta
\int_0^{2\pi}
\partial_x
\left(
\cos\left(\frac{x}{\eps^\beta}\right)
\right)
\varphi(x)
\, {\rm d} x
\\
& =
\eps^\beta
\int_0^{2\pi}
\cos\left(\frac{x}{\eps^\beta}\right)
\partial_x\varphi(x)
\, {\rm d} x
-
\eps^\beta
\left(
\cos(\eps^{-\beta}2\pi)
\varphi(2\pi)
-
\cos(0)
\varphi(0)
\right)
\end{align*}
which implies that
\begin{align*}
\left\|\sin\left(\frac{x}{\eps^\beta}\right)\right\|_{H^{-1}(0,2\pi)}
\sim \eps^\beta.
\end{align*}
As we have 
\[
\left\|
\sin \left(\frac{x}{\eps^\beta}\right)
\right\|_{L^2(0,2\pi)}
=
\mathcal{O}(1),
\]
interpolating between $H^{-1}$ and $L^2$ implies that
\begin{align*}
\left\|\sin\left(\frac{x}{\eps^\beta}\right)\right\|_{H^{-\frac12}(0,2\pi)}
\sim \eps^\frac{\beta}{2}
\end{align*}
Hence we have
\begin{align*}
\frac{1}{\eps}
\left\|
\bar{\sigma}
-
\sigma^\eps 
\right\|_{H^{-\frac12}(0,2\pi)}
\sim
\eps^{\frac{\beta}{2} - 1}.
\end{align*}
The choice of $\beta>2$ indeed implies that the $H^{-\frac12}$-norm is of $\mathcal{O}(\eps^+)$.
\end{proof}
Even though the result of Lemma \ref{lem:H-12:periodic} is given in one dimension, the proof carries over to any arbitrary dimension.
\begin{Rem}
Note that the smallness condition on the $H^{-1/2}$-norm in Theorem \ref{thm:H-1/2} is quite strong as suggested by Lemma \ref{lem:H-12:periodic} which essentially says that any smooth function depending on the argument $\frac{x}{\eps^\beta}$ would satisfy the smallness assumption \eqref{H-1/2-condition} provided $\beta>2$. Do note that we have treated the case $\beta\le2$ for the one-dimensional case in Theorem \ref{thm:1d}. If we were to suppose that the heterogeneous coefficient has the following structure
\[
\sigma^\eps(x) = \bar{\sigma}(x) + \alpha(\eps) h^\eps(x),
\]
i.e. the highly heterogeneous oscillations are of small amplitude, of size $\alpha(\eps)$, which vanish in the $\eps\to0$ limit then the limit procedure can be carried out without reverting to the condition \eqref{H-1/2-condition}. However, this is trivial as the heterogeneities are dying out in the $\eps\to0$ limit. The smallness assumption \eqref{H-1/2-condition} in Theorem \ref{thm:H-1/2} and the velocity averaging lemma (see Proof of Theorem \ref{thm:H-1/2}) does allow us to have genuine heterogeneity in the coefficients $\sigma^\eps(x)$.
\end{Rem}

\begin{Rem}\label{Rem:Non-stationary setting}
Even though the results presented so far -- Theorem \ref{thm:1d} and Theorem \ref{thm:H-1/2} -- concern the stationary transport model, analogous results for the associated non-stationary model follow straightaway. Consider
\begin{align*}
\partial_t f^\eps = \mathcal{T}^\eps f^\eps,
\qquad
f^\eps(0)=f^{in}(x,v)
\end{align*}
with the operator
\[
\mathcal{T}^\eps f^\eps := -\frac{1}{\eps} v\cdot \nabla_x f^\eps + \frac{\sigma^\eps(x)}{\eps^2} \left( \int_{\mathcal{V}} f^\eps(t,x,w) \, {\rm d}\mu(w) - f^\eps(t,x,v) \right).
\]
The following holds for the semigroup associated with $\mathcal{T}^\eps$:
\[
\left( p - \mathcal{T}^\eps\right)^{-1} = \int_0^\infty e^{-pt} e^{t\mathcal{T}^\eps}\, {\rm d}t.
\]
Inverting the Laplace transform, we get
\[
e^{t\mathcal{T}^\eps} f^{in} = \frac{1}{2\pi i} \lim_{\ell\to\infty} \int_{\gamma-i\ell}^{\gamma+i\ell} e^{pt} \left( p - \mathcal{T}^\eps\right)^{-1} f^{in} \, {\rm d}p.
\]
Taking the $\eps\to0$ limit in the previous expression, we get
\begin{align*}
\lim_{\eps\to0} e^{t\mathcal{T}^\eps} f^{in} 
& = \frac{1}{2\pi i} \lim_{\ell\to\infty} \int_{\gamma-i\ell}^{\gamma+i\ell} e^{pt} \lim_{\eps\to0} \left( p - \mathcal{T}^\eps\right)^{-1} f^{in} \, {\rm d}p.
\end{align*}
The asymptotic limit obtained in Theorem \ref{thm:H-1/2} for the resolvent helps us get
\begin{align*}
\lim_{\eps\to0} e^{t\mathcal{T}^\eps} f^{in} 
& = \frac{1}{2\pi i} \lim_{\ell\to\infty} \int_{\gamma-i\ell}^{\gamma+i\ell} e^{pt} \left( p - \mathcal{D}\right)^{-1} \rho^{in} \, {\rm d}p = e^{t\mathcal{D}} \rho^{in}.
\end{align*}
where we have used the following notations:
\[
\mathcal{D} u := \nabla_x \cdot \left( \frac{\mathfrak{D}}{\bar{\sigma}(x)} \nabla_x u \right)
\qquad
\mbox{ and }
\quad
\rho^{in}(x):= \int_{\mathcal{V}} f^{in}(x,v)\, {\rm d}\mu(v).
\]
\end{Rem}

%%%%%%%%%%%%%%%%%%%
\section{Concluding remarks}\label{sec:conclude}
%%%%%%%%%%%%%%%%%%%

We have seen in Theorem \ref{thm:1d} that we can get an explicit expression for the effective diffusion coefficient. This is analogous to the H-limits in one dimensional setting in the theory of H-convergence \cite{murat1997h}. The theory of H-convergence, however, goes beyond the one-dimensional setting in getting explicit expressions while dealing with laminated materials. Our computations show that this is indeed the case in our setting. Results in this flavor will be in a later publication of the authors \cite{bardos2016div-curl}. The main handicap of our result in the $\beta\le2$ case is that we are unable to handle dimensions higher than one. As noted in Remark \ref{rem:H-div}, the estimates are in $H({\rm div};\Omega)$ for the matrix-valued function $\langle(v\otimes v)f^\eps\rangle$. This is in stark contrast to the classical estimates used in the H-convergence theory with regard to elliptic problems. In a work which is in progress \cite{bardos2016div-curl}, the authors have made some progress in getting around the available less regularity information via constructing suitable class of test functions which emphasizes the importance of transport behavior at the scale of the microstructure. This approach employs the famous div-curl lemma (see \cite{dumas2000homogenization} for a kinetic analogue of the div-curl lemma). Finally, it would be interesting to address this simultaneous limit in the case of linear Fokker-Planck equation. The authors shall address this problem in the near future.

%\bibliography{biblio.bib}

\end{document}